\newcommand{\smb}{\left[\begin{smallmatrix}}
\newcommand{\sme}{\end{smallmatrix}\right]}
\newtheorem{theorem}{Theorem}
\newtheorem{example}[theorem]{Example}
\newtheorem{lemma}[theorem]{Lemma}
\DeclareMathOperator{\rank}{rank}
\DeclareMathOperator{\h}{\mathcal{H}}
\DeclareMathOperator{\diag}{diag}
\DeclareMathOperator{\gap}{\mathsf{gap}}
\DeclareMathOperator{\sign}{sign}
\DeclareMathOperator{\stp}{stop}
\DeclareMathOperator{\trace}{trace}
\DeclareMathOperator{\range}{Range}
\newcommand{\R}{{\mathbb R}}
\newcommand{\N}{{\mathbb N}}
\newcommand{\Matlab}{{\sc Matlab}}
\newcommand{\calO}{{\mathcal O}}
\tikzstyle{block} = [rectangle, draw, fill=blue!20, 
\tikzset{
    ncbar angle/.initial=90,
    ncbar/.style={
        to path=(\tikztostart)
        -- ($(\tikztostart)!#1!\pgfkeysvalueof{/tikz/ncbar angle}:(\tikztotarget)$)
        -- ($(\tikztotarget)!($(\tikztostart)!#1!\pgfkeysvalueof{/tikz/ncbar angle}:(\tikztotarget)$)!\pgfkeysvalueof{/tikz/ncbar angle}:(\tikztostart)$)
        -- (\tikztotarget)
    },
    ncbar/.default=0.5cm,
}    
\tikzset{round left paren/.style={ncbar=0.5cm,out=100,in=-100}}
\tikzset{round right paren/.style={ncbar=0.5cm,out=80,in=-80}}
\title{A fast spectral divide-and-conquer method for banded matrices}
\author{Ana 
\v{S}u\v{s}njara\thanks{MATH-ANCHP, \'{E}cole Polytechnique F\'{e}d\'{e}rale de Lausanne, Station 8, 1015 Lausanne, Switzerland. E-mail: ana.susnjara@epfl.ch. The work of Ana \v{S}u\v{s}njara
has been supported by the SNSF research project \emph{Low-rank updates of matrix functions and fast eigenvalue solvers.} } \and Daniel Kressner\thanks{MATH-ANCHP, \'{E}cole Polytechnique F\'{e}d\'{e}rale de Lausanne, Station 8, 1015 Lausanne, Switzerland. E-mail: daniel.kressner@epfl.ch.} }
\begin{document}

\date{}

\maketitle

\begin{abstract}
Based on the spectral divide-and-conquer algorithm by Nakatsukasa and Higham [SIAM J. Sci. Comput., 35(3):A1325--A1349, 2013], we propose a new algorithm for computing all the eigenvalues and
eigenvectors of a symmetric banded matrix. For this purpose, we combine our previous work on the fast computation of spectral projectors in the so called HODLR format, with a 
novel technique for extracting a basis for the range of such a HODLR matrix. The numerical experiments demonstrate that our algorithm exhibits
quasilinear complexity and allows for conveniently dealing with large-scale matrices. 
\end{abstract}

%
%
%

\section{Introduction}
\label{sec:introduction}

Given a large symmetric banded matrix $A\in \mathbb{R}^{n\times n}$, we consider the computation of its \textit{complete} spectral decomposition 
\begin{equation}
\label{eq:sp_decomposition}
A = Q\Lambda Q^T,\quad \Lambda = \diag(\lambda_1,\lambda_2,\ldots, \lambda_n),
\end{equation}
where $\lambda_i, i = 1,\ldots,n$ are the eigenvalues of $A$ and the columns of the orthogonal matrix $Q\in \R^{n\times n}$ the corresponding eigenvectors. This problem has attracted quite some attention from the early days of numerical linear algebra until today, particularly when $A$ is a a tridiagonal matrix. 

A number of applications give rise to banded eigenvalue problems. For example, they constitute a critical step in solvers 
for \emph{general} dense symmetric eigenvalue problems. Nearly all existing approaches, with the notable exception of~\cite{NakaHigh2013}, first reduce a given dense symmetric matrix to tridiagonal form. This is 
followed by a method for determining the spectral decomposition of a tridiagonal matrix, such as the QR algorithm, the classical divide-and-conquer (D\&C) method or the algorithm of
multiple  relatively  robust  representations  (MRRR). All these methods have complexity $\calO(n^2)$ or higher; simply because all $n$ eigenvectors are computed and stored explicitly. 

On a modern computing architecture with a memory hierarchy, it turns out   
to be advantageous to perform the tridiagonalization based on successive band reduction~\cite{BiscLangSun2000}, with a symmetric 
banded matrix as an intermediate step~\cite{AuckBlumBungHuck2011,BienIgualKressPet2011,HaidLtaiDong2011,HaidSolcGates2013,SoloBallDemmHoef016}. In this context, it would be preferable 
to design an eigenvalue solver that works directly with banded matrices, therefore avoiding the reduction from banded to tridiagonal form. Such a possibility has been explored for 
classical D\&C in~\cite{Arbenz1992,HaidLtaiDong2012}. However, the proposed 
methods seem to suffer from numerical instability or an unsatisfactory complexity growth as the bandwidth increases.  

In this paper we propose a new and fast approach to computing the spectral decomposition of a symmetric banded matrix. This is based on the spectral D\&C method from~\cite{NakaHigh2013}, which recursively 
splits the spectrum using invariant subspaces extracted from spectral projectors associated with roughly half of the spectrum. In previous work~\cite{KressnerSus2017}, we have developed a fast method for 
approximating such spectral projectors in a hierarchical low-rank format, the so called HODLR (hierarchically off-diagonal low-rank) 
format~\cite{Ambikasaran2013}.
 However, the extraction of the invariant subspace, requires to determine a basis for the range of the spectral projector. This represents a major challenge. We present an efficient algorithm for computing an orthonormal basis of an invariant subspace in the HODLR format, which heavily exploits
properties of spectral projectors. The matrix of eigenvectors is stored implicitly, via orthonormal factors, where each factor is an orthonormal basis for an invariant subspace. Our approach extends to general symmetric HODLR matrices. 

Several existing approaches that use hierarchical low-rank formats for the fast solution of eigenvalue problems are based on computing (inexact) LDL$^{T}$ decompositions in such a format, see~\cite[sec. 13.5]{Hackbusch2015} for an overview. These decompositions allow to slice the spectrum of a symmetric matrix into smaller chunks and are particularly well suited when only the eigenvalues and a few eigenvectors are needed.

To the best our knowledge, the only existing fast methods suitable for the complete spectral decomposition of a large symmetric matrix are based on variations
the classical D\&C method by Cuppen for a symmetric tridiagonal matrix~\cite{Cuppen1980/81}. One recursion of the method divides, after a rank-one perturbation, 
the matrix into a $2\times 2$ block diagonal matrix. In the conquer phase 
the rank-one perturbation is incorporated by solving a secular equation for the eigenvalues and applying a Cauchy-like matrix to the matrix of eigenvectors. Gu and Eisenstat~\cite{Gu1995} not only 
stabilized Cuppen's method but also observed that the use of the fast multipole method for the Cauchy-like matrix multiplication reduced its complexity to $\calO(n^2)$ for computing all
eigenvectors. Vogel et al.~\cite{VogelXiaCauBal2016} extended these ideas beyond tridiagonal matrices, to general symmetric HSS (hierarchically semiseparable) matrices. Moreover, by representing
the matrix of eigenvectors in factored form, the overall cost reduces to $\calO(n \log^2 n)$. While our work bears similarities with~\cite{VogelXiaCauBal2016}, such as the storage of eigenvectors 
in factored form, it differs in several key aspects. First, our developments use the HODLR format while~\cite{VogelXiaCauBal2016} uses the HSS format. The later format 
stores the low-rank factors of off-diagonal blocks in a nested manner and thus reduces the memory requirements by a factor $\log n$ \emph{if} the involved ranks stay on the same level. However, one may 
need to work with rather large values of $n$ in order to gain significant computational savings from working with HSS instead of HODLR. A second major difference is that the spectral D\&C method used in 
this paper has, despite the similarity in name, little in common with Cuppen's D\&C. One advantage of using spectral D\&C is that it conveniently allows to compute only parts of 
the spectrum. A third major difference is that~\cite{VogelXiaCauBal2016} incorporates a perturbation of rank $r>1$, as it is needed to process matrices of bandwidth larger than one 
by sequentially splitting it up into $r$ rank-one perturbations. The method presented in this paper processes higher 
ranks directly, avoiding the need for splitting and leveraging the performance of level 3 BLAS operations. While the timings reported in~\cite{VogelXiaCauBal2016} cover 
matrices of size up to $10\,240$ and appear to be comparable with the timings presented in this paper, our experiments additionally demonstrate that our newly proposed method
 allows for conveniently dealing with large-scale matrices.

The rest of the paper is organized as follows. In section~\ref{sec:dc_sp}, we recall the spectral divide-and-conquer algorithm for computing the spectral decomposition of a symmetric matrix. Section~\ref{sec:hodlr} 
gives a brief overview of the HODLR format and of a fast method for computing spectral projectors of HODLR matrices. In section~\ref{sec:invariant_subspace} we discuss 
the fast extraction of invariant subspaces from a spectral projector given in the HODLR format. Section~\ref{sec:dc_hodlr} presents the overall spectral D\&C algorithm in the HODLR format for computing the 
spectral decomposition of a banded matrix. Numerical experiments are presented in section~\ref{sec:experiments}.

\section{Spectral divide-and-conquer}
\label{sec:dc_sp}

In this section we recall the spectral D\&C method by Nakatsukasa and Higham~\cite{NakaHigh2013} for a symmetric $n\times n$ matrix $A$ with spectral decomposition~\eqref{eq:sp_decomposition}. We assume that the eigenvalues are sorted in ascending order and choose a shift $\mu \in \R$ such that
\[
\lambda_1\leq \cdots\leq\lambda_{\nu} < \mu <\lambda_{\nu+1}\leq\cdots\leq \lambda_{n}, \qquad \nu \approx n/2.
\]
The relative spectral gap associated with this splitting of eigenvalues is defined as
$${\rm gap} = \frac{\lambda_{\nu+1}  - \lambda_{\nu}}{\lambda_n - \lambda_1}.$$ 
The spectral projector associated with the first $\nu$ eigenvalues is the orthogonal projector onto the subspace spanned by the corresponding eigenvectors. Given~\eqref{eq:sp_decomposition}, it takes the form
\[
 \Pi_{<\mu} = Q\begin{bmatrix}
                    I_{\nu} &0\\
                    0 & 0
                   \end{bmatrix}Q^{T}.
\]
Note that
\[
 \Pi_{<\mu}^{T} = \Pi_{<\mu}^2 = \Pi_{<\mu}, \qquad \trace(\Pi_{<\mu}) = \rank(\Pi_{<\mu}) = \nu.
\]
The spectral projector associated with the other $n-\nu$ eigenvalues is given by
\[
 \Pi_{>\mu} = Q\begin{bmatrix}
                    0 &0\\
                    0 & I_{n-\nu}
                   \end{bmatrix}Q^{T}
\]
and satisifies analogous properties.

The method from~\cite{NakaHigh2013} first computes the matrix sign function
$$ \sign(A-\mu I) =  Q\begin{bmatrix}
-I_{\nu} &0\\
0 &I_{n-\nu}
\end{bmatrix}Q^{T}$$ and then extracts the spectral projectors via the relations
\[
 \Pi_{<\mu} = \frac{1}{2}(I - \sign(A-\mu I)), \qquad  \Pi_{>\mu}  = I-\Pi_{<\mu}.
\]
The ranges of these spectral projector are invariant subspaces of $A-\mu I$ and, in turn, of $A$.
Letting $Q_{<\mu} \in \R^{n\times \nu}$ and $Q_{>\mu}\in \R^{n\times (n-\nu)}$ denote arbitrary orthonormal bases
for $\range(\Pi_{<\mu})$ and $\range(\Pi_{>\mu})$, respectively, we therefore obtain
\begin{equation}
 \label{eq:block_eig}
 \begin{bmatrix}Q_{<\mu}&Q_{>\mu}\end{bmatrix}^{T}A\begin{bmatrix}Q_{<\mu}&Q_{>\mu}\end{bmatrix} = \begin{bmatrix}
                                                          A_{<\mu} &0\\
                                                          0 &A_{>\mu}
                                                         \end{bmatrix},
\end{equation}
where the eigenvalues of $A_{<\mu} = Q_{<\mu}^{T}AQ_{<\mu}$ are $\lambda_1,\ldots,\lambda_\nu$ and the eigenvalues of $A_{>\mu} =Q_{>\mu}^{T}AQ_{>\mu}$ are $\lambda_{\nu+1},\ldots,\lambda_n$. Applying the 
described procedure recursively to $A_{<\mu}$ and $A_{>\mu}$ leads to Algorithm~\ref{alg:sdc_eig}. When the size of the matrix is below a user-prescribed minimal size $n_{\stp}$, the recursion 
is stopped and a standard method for computing spectral decompositions is used, denoted by {\tt eig}.

\begin{algorithm}[h!]
    \caption{\text{Spectral D\&C method}}
     \label{alg:sdc_eig}
    \textbf{Input:} Symmetric matrix $A\in \R^{n\times n}$. \\
    \textbf{Output:} Spectral decomposition $A = Q\Lambda Q^T$. 

   \begin{algorithmic}[1]
   \Function {$[Q,\Lambda] =\ $\tt sdc}{$A$}
   
   \If{$n \le n_{\operatorname{stop}}$}
   \State Return $[Q,\Lambda] = {\tt eig}(A)$.
   \Else
   
   \State Choose shift $\mu$. \label{shift}
   \State Compute sign function of $A-\mu I$ and extract spectral projectors $\Pi_{<\mu}$ and $\Pi_{>\mu}$. \label{sp}
   \State Compute orthonormal bases $Q_{<\mu}, Q_{>\mu}$ of $\range(\Pi_{<\mu})$, $\range(\Pi_{>\mu})$.\label{onb}
   \State Compute $A_{<\mu} = Q_{<\mu}^{T}AQ_{<\mu}$ and $A_{>\mu} = Q_{>\mu}^{T}AQ_{>\mu}$. \label{dc}
   \State Call recursively $[Q_1,\Lambda_1] = {\tt sdc}(A_{<\mu})$ and $[Q_2,\Lambda_2] = {\tt sdc}(A_{>\mu})$.
   \State Set $Q \gets \begin{bmatrix} Q_{<\mu} Q_1 & Q_{>\mu} Q_2 \end{bmatrix}$, $\Lambda = \begin{bmatrix} \Lambda_1 & 0 \\ 0 & \Lambda_2 \end{bmatrix}$.
   
   \EndIf
   \EndFunction
 \end{algorithmic}
\end{algorithm}

In the following sections, we discuss how Algorithm~\ref{alg:sdc_eig} can be implemented efficiently in the HODLR format.

\section{Computation of spectral projectors in HODLR format}
\label{sec:hodlr}

In this section, we briefly recall the HODLR format and the algorithm from~\cite{KressnerSus2017} for computing spectral projectors in the HODLR format. 

\subsection{HODLR format}

Given an $n\times m$ matrix $M$ let us consider a block matrix partitioning of the form
\begin{equation} \label{eq:toplevelpartitioning}
 M =\left[ \begin{array}{c|c}
   M^{(1)}_{11} & M^{(1)}_{12} \\ \hline
   M^{(1)}_{21} & M^{(1)}_{22}\\ 
     \end{array} \right].
\end{equation}
This partitioning is applied recursively, $p$ times, to the diagonal blocks $M^{(1)}_{11}$, $M^{(1)}_{22}$, leading to the hierarchical partitioning shown in Figure~\ref{fig:h_hodlr_matrices}. We say that $M$ is a \emph{HODLR matrix} of level $p$ and HODLR rank $k$ if all off-diagonal blocks seen during this process have rank at most $k$. In the HODLR format, these blocks are stored, more efficiently, in terms of their low-rank factors. For example, for $p=2$, the HODLR format takes the form
\begin{equation*} \label{eq:toplevelpartitioning2}
 M = \left[ \begin{array}{c|c} \small
      \begin{array}{c|c}
   M^{(2)}_{11} & U_1^{(2)} \big( V_2^{(2)} \big)^T  \\ \hline
   U_2^{(2)} \big( V_1^{(2)} \big)^T & M^{(2)}_{22}
     \end{array}
     & U_1^{(1)} \big( V_2^{(1)} \big)^T \\ \hline
     U_2^{(1)} \big( V_1^{(1)} \big)^T & 
     \small
      \begin{array}{c|c}
   M^{(2)}_{33} & U_3^{(2)} \big( V_4^{(2)} \big)^T  \\ \hline
   U_4^{(2)} \big( V_3^{(2)} \big)^T & M^{(2)}_{44}
     \end{array}
          \end{array} \right].
\end{equation*}

The definition of a HODLR matrix of course depends on how the partitioning~\eqref{eq:toplevelpartitioning} is chosen on each level of the recursion. This choice is completely determined by the integer partitions
\begin{equation} \label{eq:integerpart}
 n = n_1 + n_2 + \cdots n_{2^p}, \qquad m = m_1 + m_2 + \cdots + m_{2^p},
\end{equation}
corresponding to the sizes $n_j\times m_j$, $j = 1,\ldots, 2^p$, of the diagonal blocks $M_{11}^{(p)}, \ldots, M_{2^p,2^p}^{(p)}$ on the lowest level of the recursion. Given specific
integer partitions~\eqref{eq:integerpart}, we denote the set of HODLR matrices of rank $k$ by $\mathcal{H}_{n\times m}(k)$.

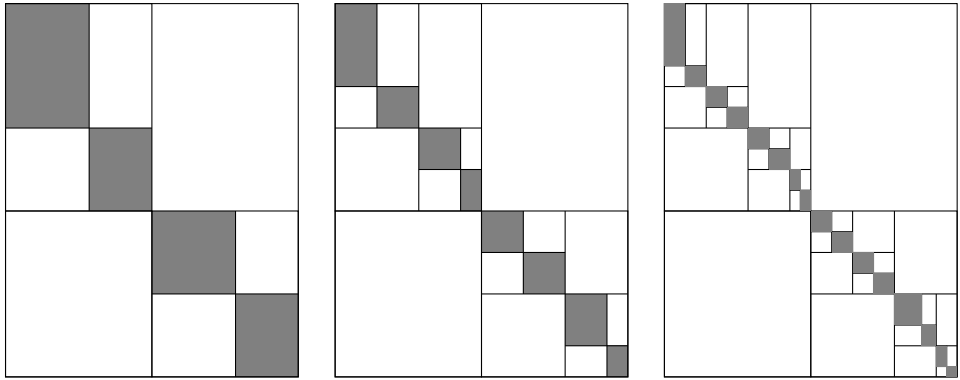
\begin{figure}[ht!]
\centering
\begin{tikzpicture}[scale=0.55]
\fill[gray] (2,6)--(3.5,6)--(3.5,4)--(2,4);
\fill[gray] (0,9)--(0,6)--(2,6)--(2,9);

\fill[gray] (3.5,4)--(5.5,4)--(5.5,2)--(3.5,2);
\fill[gray] (5.5,2)--(7,2)--(7,0)--(5.5,0);

\draw (0,0) rectangle (7,9);
\draw (0,0) rectangle  (3.5, 4);
\draw (3.5,0) rectangle  (7, 4);
\draw (0,4) rectangle  (3.5, 9);

\draw (0,6)--(3.5,6);
\draw (2,4)--(2,9);

\draw (3.5,2)--(7,2);
\draw (5.5,0)--(5.5,4);

\end{tikzpicture} 
\quad 
\begin{tikzpicture}[scale=0.55]
\fill[gray] (0,9)--(1,9)--(1,7)--(0,7);
\fill[gray] (1,7)--(2,7)--(2,6)--(1,6);

\fill[gray] (2,6)--(3,6)--(3,5)--(2,5);
\fill[gray] (3,5)--(3.5,5)--(3.5,4)--(3,4);

\fill[gray] (3.5,4)--(4.5,4)--(4.5,3)--(3.5,3);
\fill[gray] (4.5,3)--(5.5,3)--(5.5,2)--(4.5,2);
\fill[gray] (5.5,0.75)--(6.5,0.75)--(6.5,2)--(5.5,2);
\fill[gray] (6.5,0.75)--(7,0.75)--(7,0)--(6.5,0);

\draw (0,0) rectangle (7,9);
\draw (0,0) rectangle  (3.5, 4);
\draw (3.5,0) rectangle  (7, 4);
\draw (0,4) rectangle  (3.5, 9);

\draw (0,6)--(3.5,6);
\draw (2,4)--(2,9);

\draw (3.5,2)--(7,2);
\draw (5.5,0)--(5.5,4);
 
\draw (0,7)--(2,7);
\draw (1,6)--(1,9);

\draw (3,4)--(3,6);
\draw (2,5)--(3.5,5);

\draw (3.5,3)--(5.5,3);
\draw (4.5,2)--(4.5,4);
\draw (5.5,0.75)--(7,0.75);
\draw (6.5,0)--(6.5,2);

\end{tikzpicture}
\quad
\begin{tikzpicture}[scale=0.55]

\draw (0,0) rectangle (7,9);
\draw (0,0) rectangle  (3.5, 4);
\draw (3.5,0) rectangle  (7, 4);
\draw (0,4) rectangle  (3.5, 9);

\draw (0,6)--(3.5,6);
\draw (2,4)--(2,9);

\draw (3.5,2)--(7,2);
\draw (5.5,0)--(5.5,4);
 
\draw (0,7)--(2,7);
\draw (1,6)--(1,9);

\draw (3,4)--(3,6);
\draw (2,5)--(3.5,5);

\draw (3.5,3)--(5.5,3);
\draw (4.5,2)--(4.5,4);
\draw (5.5,0.75)--(7,0.75);
\draw (6.5,0)--(6.5,2);

\draw (0,7.5)--(1,7.5);
\draw (0.5,7)--(0.5,9);
\draw (1,6.5)--(2,6.5);
\draw (1.5,6)--(1.5,7);
\draw (2,5.5)--(3,5.5);
\draw(2.5,5)--(2.5,6);
\draw (3,4.5)--(3.5,4.5);
\draw (3.25,4)--(3.25,5);
\draw (3.5,3.5)--(4.5,3.5);
\draw (4,3)--(4,4);
\draw (4.5,2.5)--(5.5,2.5);
\draw (5,2)--(5,3);
\draw (6.75,0)--(6.75,0.75);
\draw (6.5,0.25)--(7,0.25);
\draw (5.5,1.25)--(6.5,1.25);
\draw (6.15,0.75)--(6.15,2);

\fill[gray] (0.5,7)--(1,7)--(1,7.5)--(0.5,7.5);
\fill[gray] (0,9)--(0,7.5)--(0.5,7.5)--(0.5,9);

\fill[gray] (1.5,6)--(2,6)--(2,6.5)--(1.5,6.5);
\fill[gray] (1,7)--(1,6.5)--(1.5,6.5)--(1.5,7);
\fill[gray] (2.5,5)--(3,5)--(3,5.5)--(2.5,5.5);
\fill[gray] (2,6)--(2,5.5)--(2.5,5.5)--(2.5,6);
 \fill[gray] (3.25,4)--(3.5,4)--(3.5,4.5)--(3.25,4.5);
 \fill[gray] (3,5)--(3,4.5)--(3.25,4.5)--(3.25,5);
 \fill[gray] (4,3)--(4.5,3)--(4.5,3.5)--(4,3.5);
 \fill[gray] (3.5,4)--(3.5,3.5)--(4,3.5)--(4,4);
 \fill[gray] (5,2)--(5.5,2)--(5.5,2.5)--(5,2.5);
 \fill[gray] (4.5,3)--(4.5,2.5)--(5,2.5)--(5,3);
 \fill[gray] (6.15,1.25)--(6.5,1.25)--(6.5,0.75)--(6.15,0.75);
\fill[gray] (5.5,2)--(5.5,1.25)--(6.15,1.25)--(6.15,2);
 \fill[gray] (6.75,0)--(7,0)--(7,0.25)--(6.75,0.25);
 
 \fill[gray] (6.5,0.75)--(6.5,0.25)--(6.75,0.25)--(6.75,0.75);

\end{tikzpicture}
\caption{Illustration of HODLR matrices for $p=2$, $p=3$, and $p=4$. The diagonal blocks (grey) are stored as dense matrices, while 
the off-diagonal blocks (white) are stored in terms of their low-rank factors. }
\label{fig:h_hodlr_matrices}
\end{figure}

\subsection{Operations in the HODLR format}
\label{sec:hodlr_arithmetics}

Assuming that the integer partitions~\eqref{eq:integerpart} are balanced, $p =\calO(\log \tilde n)$ with $\tilde n = \max\{n,m\}$, and $k = \calO(1)$, the storage of $M \in \mathcal H_{n\times m}(k)$ in 
the HODLR format requires $\mathcal O( \tilde n \log \tilde n)$ memory. Various matrix operations with HOLDR matrices can be preformed with linear-polylogarithmic complexity. Table~\ref{table:complexity_HODLR} 
summarizes the operations needed in this work; we refer to, e.g.,~\cite{Ballani2016,Hackbusch2015} for more details. In order to perform operations including two HODLR matrices, 
the corresponding partitions ought to be compatible.

The operations listed in Table~\ref{table:complexity_HODLR} with subscript $\h$ employ recompression in order 
to limit the increase of off-diagonal ranks. In this paper recompression is done adaptively, such that the $2$-norm approximation error in each off-diagonal block is bounded by a prescribed truncation tolerance $\epsilon$.  For matrix addition, recompression is done only after adding two off-diagonal blocks, whereas multiplying HOLDR matrices and computing the Cholesky decomposition requires recompression in intermediate steps.   
    
\begin{table}[ht!]
 \caption{Complexity of operations involving HODLR matrices: $M\in \mathcal H_{n\times n}(k)$ symmetric positive definite,  $T\in \h_{m\times m}(k)$ triangular and 
 invertible,  $M_1,M_2 \in \mathcal H_{n\times m}(k)$, $M_3\in \h_{m\times p}(k)$,  $B\in \R^{m\times p}$,  $v\in \R^m$.}
\label{table:complexity_HODLR}
\centering \begin{tabular}{rcl}
 \hline
 Operation &  &Computational complexity \\ \hline
 Matrix-vector multiplication:   $M_1 v$ &  & $\mathcal{O}(k\tilde{n}\log \tilde{n})$, with $\tilde{n} = \max\lbrace n,m \rbrace$ \\
 Matrix addition: $M_1 +_{\mathcal{H}} M_2$  &  & $\mathcal{O}(k^2\tilde{n}\log \tilde{n})$, with $\tilde{n} = \max\lbrace n,m \rbrace$  \\
 Matrix-matrix multiplication: $M_2 *_{\mathcal{H}} M_3$ &  & $\mathcal{O}(k^2\tilde{m}\log^2 \tilde{m})$, with $\tilde{m} = \max\lbrace n,m,p \rbrace$  \\
 Cholesky decomposition: $\h\operatorname{-Cholesky}(M)$ & & $\mathcal{O}(k^2 n\log^2 n)$  \\ 
 Solution of  triangular system: $T^{-1} B$ & &$\mathcal{O}(k m\log m)$  \\ 
 Multiplication with $(\text{triangular})^{-1}$: $M_1 *_{\mathcal{H}} T^{-1}$  & & $\mathcal{O}(k^2\tilde{n}\log^2 \tilde{n})$, with $\tilde{n} = \max\lbrace n,m \rbrace$ \\ \hline
 \end{tabular}
 \end{table}
 
In this work we also need to extract submatrices of HODLR matrices. Let $M\in \mathcal H_{n\times n}(k)$, associated with an 
integer partition $n = n_1 + \cdots + n_{2^p}$, and consider a subset of indices $C \subset \{1,\ldots,n\}$. Then the submatrix $M(C,C)$ is again a HODLR matrix. To see this, consider the 
partitioning~\eqref{eq:toplevelpartitioning} and let $C = C_1 \cup C_2$ with $C_1 = C \cap [1, n_1^{(1)}]$ and $C_2 = C \cap [n_1^{(1)}+1, n]$, where $n^{(1)}_1$ is the size of $M^{(1)}_{11}$.  
Then
\begin{eqnarray*}
  M(C,C) &=&  \left[ \begin{array}{c|c}
   M^{(1)}_{11}(C_1,C_1) & M^{(1)}_{12}(C_1,C_2) \\ \hline
   M^{(1)}_{21}(C_2,C_1) & M^{(1)}_{22}(C_2,C_2) \\ 
     \end{array} \right] \\ &=& \left[ \begin{array}{c|c}
   M^{(1)}_{11}(C_1,C_1) & U_1^{(2)}(C_1,:) \big( V_2^{(2)}(C_2,:) \big)^T \\ \hline
   U_2^{(2)}(C_2,:) \big( V_1^{(2)}(C_1,:) \big)^T & M^{(1)}_{22}(C_2,C_2) \\ 
     \end{array} \right].
\end{eqnarray*}
Hence, the off-diagonal blocks again have rank at most $k$. Applying this argument recursively to $M^{(1)}_{11}(C_1,C_1)$, $M^{(1)}_{22}(C_2,C_2)$ establishes $M(C,C) \in \mathcal H_{m\times m}(k)$, associated with the integer partition
\[
  |C| =: m = m_1 + m_2 + \cdots m_{2^p}, 
\]
where $m_1$ is the cardinality of $C \cap [1,n_1]$, $m_2$ is the cardinality of $C \cap [n_1+1,n_1+n_2]$, and so on. Note that it may happen that some $m_j = 0$, in which case the corresponding blocks in the HODLR format vanish. Formally, this poses no problem in the definition and operations with HOLDR matrices. In practice, these blocks are removed to reduce overhead.

\subsection{Computation of spectral projectors in the HODLR format}

The method presented in~\cite{KressnerSus2017} for computing spectral projectors of banded matrices is based on 
the dynamically weighted Halley iteration from~\cite{NakaBaiGygi2010, NakaHigh2013} for computing the matrix sign function. In this work, we also need a slight variation of that method for dealing with HODLR matrices.

Given a symmetric non-singular matrix $A$, the method from~\cite{NakaBaiGygi2010} uses an iteration
\begin{equation}
\label{eq:dwh_iteration}
X_{k+1} =  \frac{b_k}{c_k}X_k  + \Big( a_k - \frac{b_k}{c_k}\Big)X_k(I+c_k X^T_k X_k)^{-1}, \quad X_0 = A/\alpha, 
\end{equation}
that converges globally cubically to $\sign(A)$. The parameter $\alpha > 0 $ is such that $\alpha\gtrsim \Vert A \Vert_2$. The parameters $a_k, b_k, c_k$ are computed by
\begin{equation}
\label{eq:qdwh_parameters_abc}
 a_k = h(l_k),\quad b_k = (a_k-1)^2/4, \quad c_k = a_k + b_k - 1,
\end{equation}
where $l_k$  is determined by the recurrence 
\begin{equation}
\label{eq:qdwh_parameter_l}
l_k = l_{k-1}(a_{k-1} + b_{k-1}l^{2}_{k-1})/(1 + c_{k-1}l^{2}_{k-1}), \quad k \geq 1,
\end{equation}
with a lower bound $l_{0}$ for $\sigma_{\min}(X_0)$, and the function $h$ is given by
\begin{equation*}
h(l) = \sqrt{1+ \gamma} + \frac{1}{2}\sqrt{8 - 4\gamma + \frac{8(2 - l^2)}{l^2\sqrt{1 + \gamma}}},\quad \gamma = \sqrt[3]{\frac{4(1 - l^2)}{l^4}}.
\end{equation*}
In summary, except for $\alpha$ and $l_0$ the parameters determining~\eqref{eq:dwh_iteration} are simple and cheap to compute.

The algorithm {\tt hQDWH} presented in~\cite{KressnerSus2017} for banded $A$ is essentially an implementation of~\eqref{eq:dwh_iteration} in the HODLR matrix arithmetic, with 
one major difference. Following~\cite{NakaHigh2013}, the first iteration of {\tt hQDWH} avoids the computation of the Cholesky factorization for the evaluation of $(I+c_0 X^T_0 X_0)^{-1} = (I+c_0/\alpha^2 A^2)^{-1}$ in 
the first iteration. Instead, a QR decomposition of a $2n\times n$ matrix $\begin{bmatrix}\sqrt{c_0} X_0 \\ I\end{bmatrix} $ is computed. This improves numerical 
stability and allows us to safely determine spectral projectors even for relatives gaps of order $10^{-16}$. For reasons explained 
in~\cite[Remark 3.1]{KressnerSus2017}, existing algorithms for performing QR decompositions of HODLR matrices come with various drawbacks. When $A$ is a HODLR matrix, 
Algorithm~\ref{alg:hdwh} therefore uses a Cholesky decomposition (instead of a QR decomposition) in the first step as well. In turn, as will be 
demonstrated by numerical experiments in section~\ref{sec:experiments}, this restricts the application 
of the algorithm to matrices with relative spectral gaps of order  $10^{-8}$ or larger. We do not see this as a major disadvantage in the setting under consideration. The relative gap is controlled by the choice of the shift $\mu$ in our D\&C method and tiny relative spectral gaps can be easily avoided by adjusting $\mu$.

The inexpensive estimation of $\alpha,l_0$ for banded $A$ is discussed in~\cite{KressnerSus2017}. For a HODLR matrix $A$, we determine $\alpha$ and $l_0$ by applying a few steps of the (inverse) power method to $A^2$ and $A^2/\alpha^2$, respectively.

\begin{algorithm}[h!]
    \caption{\text{hDWH algorithm}} 
    \label{alg:hdwh}
   \textbf{Input}: Symmetric HODLR matrix $A\in \R^{n\times n}$, truncation tolerance $\epsilon>0$, stopping tolerance $\varepsilon>0$. \\
   \textbf{Output}: Approximate spectral projectors $\Pi_{<0}$ and $\Pi_{>0}$ in the HODLR format.
    
   \begin{algorithmic}[1]
    
   \If{$A$ is banded}  
    \State Compute $\Pi_{<0}$ and $\Pi_{>0}$ using the {\tt hQDWH} algorithm~\cite{KressnerSus2017}.
    \Else
    \State Compute initial parameters $\alpha\gtrsim \Vert A \Vert_2$ via power iteration on $A^2$ and $l_0 \lesssim \sigma_{\min}(A/\alpha)$ via inverse power iteration on
   $A^2/\alpha^2$.
   \State $X_0 = A/\alpha$. 
   \State $k = 0$.
   \While{$\vert 1 - l_k\vert > \varepsilon$} \label{alg_stopping}   
     \State Compute $a_k$, $b_k$, $c_k$ according to the recurrence~\eqref{eq:qdwh_parameters_abc}.
      \State $W_k = \h\operatorname{-Cholesky}(I + c_kX_k^{T}*_{\h}X_k)$. \label{alg_chol1} 
         \State $Y_k = X_k*_{\h}W_k^{-1}$. 
         \State $V_k = Y_k *_{\h} W_k^{-T}$. 
        \State  $X_{k+1} = \frac{b_k}{c_k}X_k +_{\h} \left(a_k - \frac{b_k}{c_k}\right)V_k$. \label{alg_chol}  
     \State $k = k+1$.
     \State Compute $l_k$ according to the recurrence~\eqref{eq:qdwh_parameter_l}.
   \EndWhile
   \State Set $U = X_k$. \label{alg_sign} 
   \State Return $\Pi_{<0} =\frac{1}{2}(I -U)$ and $\Pi_{>0} =\frac{1}{2}(I +U)$. \label{alg_sp} 
  \EndIf
  \end{algorithmic}
\end{algorithm}

Assuming that a constant number of iterations is needed and that the HOLDR ranks of all intermediate quantities are bounded by $k$, the complexity of Algorithm~\ref{alg:hdwh} is $\calO(k^2n\log^2 n)$.

\section{Computation of invariant subspace basis in the HODLR format}  
\label{sec:invariant_subspace}

This section addresses the efficient extraction of a basis for the range of a spectral projector $\Pi_{<\mu}$ given in the HODLR format.

Assuming that $\rank (\Pi_{<\mu}) = \nu$, the most straightforward approach to obtain a basis for $\range(\Pi_{<\mu})$ is to simply take its first $\nu$ columns. Numerically, 
this turns out to be a terrible idea, especially when $A$ is banded.

\begin{example} \label{example:badcond} \rm
Let $n$ be even and let $A\in \R^{n\times n}$ be a symmetric banded matrix with bandwidth $b$ and eigenvalues distributed uniformly in $[-1, \hskip 3pt -10^{-1}]\cup [10^{-1}, \hskip 3pt 1]$. In 
particular, $\rank(\Pi_{<0}) = n/2$. Figure~\ref{fig:condition_first_half} shows that the condition number of the first $n/2$ columns of $\Pi_{<0}$ grows dramatically as $n$ increases. By computing a 
QR decomposition of these columns, we obtain an orthonormal basis $Q_{1} \in \R^{n\times n/2}$. This basis has perfect condition number but, as Table~\ref{table:condition_first_half} shows, it represents a r
ather poor approximation of $\range(\Pi_{<0})$.
\begin{figure}[tbhp]
\begin{center}
\includegraphics[width=0.4\textwidth]{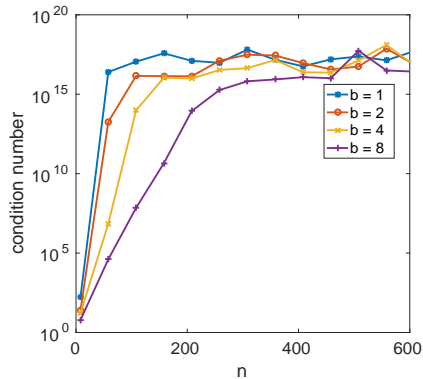}
\end{center}
\caption{Condition number of the first $n/2$ columns of the spectral projector $\Pi_{<0}$ for the matrix described in Example~\ref{example:badcond} with bandwidths $b = 1,2,4,8$.}
\label{fig:condition_first_half}
\end{figure}

\begin{table}[h!]
\caption{Angles (in radians) between $\range(\Pi_{<0})$ and $\range(Q_{1})$, with
$Q_{1}$ an orthonormal basis for $\range(\Pi_{<0}(:, 1:\frac{n}{2}))$.}
 \label{table:condition_first_half}
\centering
\begin{tabular}[t]{c||c}
$n$  &$\measuredangle(\range(\Pi_{<0}),\range(Q_{1})))$\\
 \hline
 \hline
 $64$  &$4.4916e-02$\\
 $256$  &$1.5692e+00$\\
 $1024$   &$1.5700e+00$ \\
 $4096$ &$1.5707e+00$\\
 \hline
\end{tabular}
\end{table}
\end{example}

There exist a number of approaches that potentially avoid the problems observed in Example~\ref{example:badcond}, such as applying a QR factorization 
with pivoting~\cite[Chapter 5.4]{GolVanL2013} to $\Pi_{<0}$. None of these approaches has been realized in the HODLR format. In fact, techniques like pivoting across blocks appear to be incompatible with the format.

In the following, we develop a new algorithm for computing a basis for $\range(\Pi_{<\mu})$ in the HODLR format, which consists of two steps: 
(1) We first determine a set of well-conditioned columns of $\Pi_{<\mu}$ by performing a Cholesky factorization with \emph{local} pivoting. As we will see below, the number of obtained 
columns is generally less but not much less than $\nu$. (2) A randomized algorithm is applied to complete the columns to a basis of $\range(\Pi_{<\mu})$.

\subsection{Column selection by block Cholesky with local pivoting}

The spectral projector $\Pi_{<\mu}$ is not only symmetric positive semidefinite but it is also idempotent. The (pivoted) Cholesky factorization of such matrices has particular properties.

\begin{theorem}[{\cite[Theorem 10.9]{Higham1996}}]
\label{thm:cholp}
 Let $B\in \R^{n\times n}$ be a symmetric positive semidefinite matrix of rank $r$. Then there is a permutation matrix $P$ such that $P^TBP$ admits a Cholesky factorization:
 \begin{equation*}
  P^TBP = R^T R, \quad R = \begin{bmatrix} R_{1} &R_{2}\\ 0 &0\end{bmatrix},
 \end{equation*}
where $R_{1}$ is a $r\times r$ upper triangular matrix with positive diagonal elements.
\end{theorem}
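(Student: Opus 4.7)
The plan is to prove this by induction on $n$, constructing the pivoted Cholesky factorization one column at a time using the largest remaining diagonal entry as the pivot. The base case $n=1$ is immediate: either $b_{11} = 0$, in which case $r = 0$ and $R = 0$, or $b_{11} > 0$, in which case $r = 1$ and $R = [\sqrt{b_{11}}]$. For the inductive step I will repeatedly invoke the elementary fact that for any symmetric positive semidefinite matrix one has $|b_{ij}|^2 \leq b_{ii} b_{jj}$, so that a zero diagonal entry forces the corresponding row and column to vanish entirely.

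If $B = 0$, the claim holds trivially with $P = I$ and $R = 0$. Otherwise some diagonal entry is strictly positive. I would choose a permutation $P_1$ bringing the largest diagonal entry $\beta > 0$ into position $(1,1)$, so that
\begin{equation*}
P_1^T B P_1 = \begin{bmatrix} \beta & v^T \\ v & C \end{bmatrix} = \begin{bmatrix} \sqrt{\beta} & 0 \\ v/\sqrt{\beta} & I \end{bmatrix} \begin{bmatrix} 1 & 0 \\ 0 & S \end{bmatrix} \begin{bmatrix} \sqrt{\beta} & v^T/\sqrt{\beta} \\ 0 & I \end{bmatrix},
\end{equation*}
where $S := C - vv^T/\beta$ is the Schur complement. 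Two standard properties are needed: $S$ inherits symmetric positive semidefiniteness from $P_1^T B P_1$, and $\rank(S) = r - 1$ because the surrounding block triangular factors are invertible and therefore rank-preserving.

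Applying the induction hypothesis to the $(n-1)\times(n-1)$ matrix $S$ yields a permutation $\tilde P$ and an upper triangular factor $\tilde R = \begin{bmatrix} \tilde R_1 & \tilde R_2 \\ 0 & 0 \end{bmatrix}$, with $\tilde R_1 \in \R^{(r-1)\times(r-1)}$ upper triangular and with positive diagonal entries, such that $\tilde P^T S \tilde P = \tilde R^T \tilde R$. Setting
\begin{equation*}
P := P_1 \begin{bmatrix} 1 & 0 \\ 0 & \tilde P \end{bmatrix}, \qquad R := \begin{bmatrix} \sqrt{\beta} & v^T \tilde P/\sqrt{\beta} \\ 0 & \tilde R \end{bmatrix},
\end{equation*}
a direct block multiplication then gives $P^T B P = R^T R$. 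Partitioning the first row of $R$ conformally with the block structure of $\tilde R$ shows that $R$ has the required form, its leading $r \times r$ block being upper triangular with positive diagonal entries $\sqrt{\beta}$ and those of $\tilde R_1$.

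The only real obstacle, and the step that uses positive semidefiniteness in an essential way, is verifying that the pivoting strategy never forces division by zero before $r$ steps have been completed. This is precisely guaranteed by the SPSD diagonal inequality above: whenever the current Schur complement has positive rank it must contain a strictly positive diagonal entry, since otherwise every diagonal would be zero, forcing the entire Schur complement and hence its rank to vanish. After exactly $r$ such steps the remaining Schur complement has rank zero and is therefore identically zero, so the last $n - r$ rows of $R$ are zero as required.
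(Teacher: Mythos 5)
Your proof is correct: the induction on $n$ via the Schur complement $S = C - vv^T/\beta$, together with the observation that a PSD matrix with all-zero diagonal must vanish (so a positive pivot is always available while the rank is positive), is the standard argument and is exactly how this result is established in the cited reference. The paper itself states this theorem without proof, citing \cite[Theorem 10.9]{Higham1996}, so there is nothing in the paper to compare against beyond noting that your argument is a complete and correct reconstruction of that classical proof.
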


Note that, by the invertibility of $R_{1}$, the first $r$ columns of $BP$ as well as $[R_{1}\quad R_{2}]^T$ form a basis for $\range(B)$. The latter turns out to be orthonormal if $B$ is idempotent.

\begin{lemma}[{\cite[Corollary 1.2.]{MolerStew1978}}] 
 \label{thm:on_columns}
 Suppose, in addition to the hypotheses of Theorem~\ref{thm:cholp}, that $B^2 = B$. Then 
 $$[R_{1}\quad R_{2}]\begin{bmatrix}R^{T}_{1}\\ R^{T}_{2}\end{bmatrix} = I_r.$$ 
\end{lemma}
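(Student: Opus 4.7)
The plan is to show that $\tilde R := [R_1\ R_2]$ has orthonormal rows, which is precisely the identity $\tilde R \tilde R^T = I_r$ claimed by the lemma. First, I would record the fact that, because $R_1$ is $r \times r$ upper triangular with positive diagonal by Theorem~\ref{thm:cholp}, the block $\tilde R$ has full row rank $r$. Consequently $M := \tilde R \tilde R^T$ is an $r \times r$ symmetric positive definite matrix, and in particular invertible.

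Next I would bring in the additional hypothesis $B^2 = B$. Since $P^T B P = \tilde R^T \tilde R$, squaring and using the idempotency of $P^T B P$ gives
\[
\tilde R^T \tilde R \tilde R^T \tilde R \;=\; \tilde R^T \tilde R,
\]
which can be rewritten as $\tilde R^T M \tilde R = \tilde R^T \tilde R$. Multiplying on the left by $(\tilde R \tilde R^T)^{-1} \tilde R$ (legal because $M$ is invertible) collapses this to $M \tilde R = \tilde R$, and then multiplying on the right by $\tilde R^T (\tilde R \tilde R^T)^{-1}$ yields $M = I_r$, which is exactly what was to be proved.

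I do not anticipate any genuine obstacle: the whole argument rests on two easily available facts, namely that $\tilde R$ has full row rank (inherited from the invertibility of $R_1$) and that the idempotency of $B$ transfers cleanly to $M$ after cancellations. The only small point of care is to justify those cancellations by invoking invertibility of $\tilde R \tilde R^T$ rather than writing, say, pseudo-inverses; alternatively one could argue via eigenvalues by noting that $\tilde R^T \tilde R$ and $\tilde R \tilde R^T$ share the same nonzero spectrum and that the $r$ nonzero eigenvalues of the projector $\tilde R^T \tilde R$ are all equal to one, forcing the symmetric positive definite matrix $M$ to equal $I_r$.
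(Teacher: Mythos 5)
Your argument is correct. Note that the paper itself offers no proof of this lemma --- it is quoted directly from Moler and Stewart --- so there is nothing internal to compare against; your derivation (idempotency of $P^TBP$ transfers to $\tilde R^T\tilde R$, full row rank of $\tilde R = [R_1\ R_2]$ makes $M=\tilde R\tilde R^T$ invertible, and the cancellations force $M=I_r$) is a clean, self-contained justification, and your alternative remark that $\tilde R\tilde R^T$ and $\tilde R^T\tilde R$ share nonzero spectrum, all equal to one, is essentially the classical argument from the cited source. The only implicit step worth making explicit is that $(P^TBP)^2 = P^TBP$ uses $PP^T=I$ for the permutation matrix $P$, which is immediate.
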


The algorithm described in~\cite[Chapter 10]{Higham1996} for realizing Theorem~\ref{thm:cholp} chooses the maximal diagonal element as the pivot in every step of the standard 
Cholesky factorization algorithm. In turn, the diagonal elements of the Cholesky factor are monotonically decreasing and it is safe to decide which ones are considered zero numerically. Unfortunately, this algorithm, 
which will be denoted by {\tt cholp} in the following, cannot be applied to $\Pi_{<\mu}$ because the diagonal pivoting strategy destroys the HODLR format.
Instead, we use {\tt cholp} only for the (dense) diagonal blocks  of $\Pi_{<\mu}$. 

To illustrate the idea of our algorithm, we first consider a general symmetric positive semidefinite HODLR matrix $M$ of level $1$, which takes the form
\begin{equation*} \label{eq:HODLR1}
M = \begin{pmatrix}
               M_{11} & U_1 V_2^T\\
               V_2 U_1^T  &M_{22}\\ 
              \end{pmatrix}
\end{equation*}  
with dense diagonal blocks $M_{11}$, $M_{22}$. Applying {\tt cholp} to $M_{11}$ gives a decomposition $P_1^T M_{11} P_1 = R_{11}^T R_{11}$, with the diagonal elements of $R_{11}$ decreasing monotonically. As $M$, and 
in turn also $M_{11}$, will be chosen as a principal submatrix of $\Pi_{<\mu}$, Lemma~\ref{thm:on_columns} implies that $\|R_{11}\|_2 \le 1$. In particular, the diagonal elements of $R_{11}$ are bounded by $1$. Let $s_1$ 
denote the number of diagonal elements not smaller than a prescribed threshold $\delta$. As will be 
shown in Lemma~\ref{lemma:invR} below, choosing $\delta$ sufficiently close to $1$ ensures that $R_{11}(1:s_1,1:s_1)$ is well-conditioned. Letting $\pi_1$ denote the permutation associated
with $P_1$ and setting $C_1 = \pi_1(1:s_1)$, we have
\[
 M_{11}(C_1,C_1) = R_{11}(1:s,1:s)^T R_{11}(1:s,1:s).
\]
The Schur complement of this matrix in $M$ (without considering the rows and columns neglected in the first block) is given by
\begin{equation} \label{eq:sc}
 S = M_{22} - V_2 U_1(C_1,:)^T M_{11}(C_1,C_1)^{-1} U_1(C_1,:)V_2^{T} = M_{22} - \tilde R_{12}^T \tilde R_{12},
\end{equation}
where the rank of $\tilde R_{12} := R_{11}(1:s_1,1:s_1)^{-T} U_1(C_1,:) V_2^{T}$ is not larger than the rank of $U_1 V_2^T$.
We again apply {\tt cholp} to $S$ and only retain diagonal elements of the Cholesky factor $R_{22}$ larger or equal than $\delta$. Letting $C_2$ denote the corresponding indices and setting $s_2 = |C_2|$,
$C= C_1 \cup (n_1+C_2)$, where $n_1$ is the size of $M_{11}$,  we obtain the factorization
\[
 M(C,C) = \tilde R^T \tilde R \quad \text{with} \quad \begin{bmatrix}
           R_{11}(1:s_1,1:s_1) & \tilde R_{12}(:,C_2) \\
           0 & R_{22}(1:s_2,1:s_2)
          \end{bmatrix}.
\]
For a general HODLR matrix, we proceed recursively in an analogous fashion, with the difference that we now form submatrices of HODLR matrices (see section~\ref{sec:hodlr_arithmetics}) and the 
operations in~\eqref{eq:sc} are executed in the HODLR arithmetic.

The procedure described above leads to Algorithm~\ref{alg:cholp_col_sel}. Based on the complexity of operations stated in Table~\ref{table:complexity_HODLR}, the cost of the 
algorithm applied to an $n\times n$ spectral projector $\Pi_{<\mu} \in \h_{n\times n}(k)$ is $\calO(k^2n\log^2 n)$. In Line~\ref{line:sc} we update a HODLR matrix with a matrix given by its low-rank representation. This operation essentially corresponds to 
the addition of two HODLR matrices. Recompression is performed when computing all off-diagonal blocks, while dense diagonal blocks are updated by 
a dense matrix of the corresponding size. In Line~\ref{line:sc} we also enforce symmetry in the Schur complement.  

\begin{algorithm}[ht!]
    \caption{Incomplete Cholesky factorization with local pivoting for HODLR matrices}
    \label{alg:cholp_col_sel}
    \textbf{Input:} Positive semidefinite HODLR matrix $M\in \h_{n\times n}(k)$ of level $p$, tolerance $\delta > 0$.  \\
    \textbf{Output:} Indices $C \subset [1,n]$ and upper triangular HODLR matrix $\tilde R$ such that $M(C,C) \approx \tilde R^T\tilde R$, with $\tilde r_{ii} \geq \delta$ for $i = 1,\ldots,|C|$. 
   \begin{algorithmic}[1]
    \Function  {$[C,\tilde R] =$ \tt hcholp$\_${\tt inc}}{$M$}
   \If{$p = 0$}
    \State Compute $[R, \pi] = {\tt cholp}(M)$ such that $M(\pi,\pi) = R^T R$.
    \State Set $s$ such that $r_{11}\ge \delta, \ldots, r_{ss}\ge \delta$ and $r_{s+1,s+1} < \delta$ (or $s = n$).
    \State Return $C = \pi(1:s)$ and $\tilde R = R(1:s,1:s)$.
   \Else
    \State Partition $M = \begin{pmatrix}
               M_{11} & U_1 V_2^T\\
               V_2 U_1^T  &M_{22}\\ 
              \end{pmatrix}$.
    \State Call recursively $[C_1,\tilde R_{11}] = \text{\tt hcholp$\_${\tt inc}}(M_{11})$.
    \State Compute $\tilde U_1 = \tilde R_{11}^{-T} U_1(C_1,:)$. \label{line:u1}
    \State Compute $S = M_{22} -_{\h} V_2 \tilde U_1^T \tilde U_1 V_2^T$. \label{line:sc}
    \State Call recursively $[C_2,\tilde R_{22}] = \text{\tt hcholp$\_${\tt inc}}(S)$.

    \State Return $C = C_1 \cup (n_1 + C_2)$ and HODLR matrix $\tilde R = \begin{bmatrix}
                                                              \tilde R_{11} & \tilde U_1 V_2(C_2,:)^T \\
                                                              0 & \tilde R_{22}
                                                             \end{bmatrix}$.
   \EndIf
   \EndFunction
  \end{algorithmic}
\end{algorithm}

\subsubsection{Analysis of Algorithm~\ref{alg:cholp_col_sel}}

The indices $C$ selected by Algorithm~\ref{alg:cholp_col_sel} applied to $\Pi_{<\mu}$ need to attain two goals: (1) $\Pi_{<\mu}(:,C)$ has moderate condition number, (2) $|C|$ is not much smaller 
than the rank of $\Pi_{<\mu}$. In the following analysis, we show that the first goal is met when choosing $\delta$ sufficiently close to $1$. The attainment of the second goal is 
demonstrated by the numerical experiments in section~\ref{sec:experiments}.

Our analysis needs to take into account that Algorithm~\ref{alg:cholp_col_sel} is affected by error due to truncation in the HODLR arithmetic. On the one hand, 
the input matrix, the spectral projector $\Pi_{<\mu}$ computed by Algorithm~\ref{alg:hdwh}, is not exactly idempotent:
\begin{equation}
\label{eq:error_idempotent}
\Pi^2_{<\mu} = \Pi_{<\mu} + F, 
\end{equation}
with a symmetric perturbation matrix $F$ of small norm. On the other hand, the incomplete Cholesky factor $\tilde R$ returned by 
Algorithm~\ref{alg:cholp_col_sel} is inexact as well:
\begin{equation} 
\label{eq:error_chol}
\Pi_{<\mu}(C, C) = \tilde R^T \tilde R + E,   
\end{equation}
with another symmetric perturbation matrix $E$ of small norm.
For a symmetric matrix $\Pi_{<\mu}$ satisfying~\eqref{eq:error_idempotent}, Theorem 2.1 in~\cite{MolerStew1978} shows that 
\begin{equation} 
 \Vert \Pi_{<\mu} \Vert_2 \leq 1+ \Vert F\Vert_2.  \label{eq:bound_norm_sp} 
\end{equation}
The following lemma establishes a bound on the norm of the inverse of $\Pi_{<\mu}(C, C)$.
\begin{lemma} \label{lemma:invR}
 With the notation introduced above, set $\varepsilon_{\h} = \|E\|_2 + \|F\|_2$ and $r = \vert C\vert$, and suppose that 
$1-\delta^2 + \varepsilon_{\h}< 1/r$. Then $\|\Pi_{<\mu}(C, C)^{-1}\|_2 \le \frac{1}{r}\frac{1}{\delta^2 - 1 + 1/r - \varepsilon_{\h}}$.
\end{lemma}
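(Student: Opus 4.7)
The plan is to reduce everything to a lower bound on $\lambda_{\min}(\tilde R^T \tilde R)$, and then produce this lower bound from a trace-versus-maximum-eigenvalue argument tailored to the two pieces of information we actually have about $\tilde R$: its diagonal is bounded below by $\delta$, and the norm of $\tilde R^T \tilde R$ is close to that of the spectral projector.

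First, because $\Pi_{<\mu}$ is symmetric, $B := \Pi_{<\mu}(C,C)$ is symmetric, and from~\eqref{eq:error_chol} so is $E = B - \tilde R^T \tilde R$. Weyl's inequality applied to $B = \tilde R^T \tilde R + E$ therefore gives
\[
  \lambda_{\min}(B) \;\geq\; \lambda_{\min}(\tilde R^T \tilde R) - \|E\|_2,
\]
and once we show the right-hand side is positive, $\|B^{-1}\|_2 = 1/\lambda_{\min}(B)$ follows.

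Next I would pin down an upper bound on $\lambda_{\max}(\tilde R^T \tilde R) = \|\tilde R^T \tilde R\|_2$. Combining~\eqref{eq:error_chol} with the inequality $\|B\|_2 \leq \|\Pi_{<\mu}\|_2$ (submatrix) and the Moler--Stewart bound~\eqref{eq:bound_norm_sp} yields
\[
  \lambda_{\max}(\tilde R^T \tilde R) \;\leq\; \|\Pi_{<\mu}\|_2 + \|E\|_2 \;\leq\; 1 + \|F\|_2 + \|E\|_2 \;=\; 1 + \varepsilon_\h.
\]
On the other hand, the diagonal condition $\tilde r_{ii} \geq \delta$ gives the trace lower bound
\[
  \operatorname{tr}(\tilde R^T \tilde R) \;=\; \|\tilde R\|_F^2 \;\geq\; \sum_{i=1}^{r} \tilde r_{ii}^2 \;\geq\; r\delta^2.
\]
Writing the trace as the sum of the $r$ eigenvalues of $\tilde R^T \tilde R$ and bounding the $r-1$ largest by $1+\varepsilon_\h$, I get
\[
  \lambda_{\min}(\tilde R^T \tilde R) \;\geq\; r\delta^2 - (r-1)(1+\varepsilon_\h).
\]

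Finally, I would plug this into the Weyl bound, use $\varepsilon_\h = \|E\|_2 + \|F\|_2$ to absorb the $-\|E\|_2$ term, and simplify:
\[
  \lambda_{\min}(B) \;\geq\; r\delta^2 - (r-1)(1+\varepsilon_\h) - \|E\|_2 \;\geq\; r\delta^2 - r + 1 - r\varepsilon_\h \;=\; r\bigl(\delta^2 - 1 + 1/r - \varepsilon_\h\bigr).
\]
The hypothesis $1-\delta^2+\varepsilon_\h < 1/r$ is exactly what guarantees the right-hand side is positive; inverting gives the claimed bound on $\|B^{-1}\|_2$. The only non-routine step I anticipate is resisting the more natural-looking determinantal bound $\det(\tilde R^T\tilde R)\geq \delta^{2r}$, which produces a weak exponential-in-$r$ estimate; the right move is the linear trace-versus-top-eigenvalue argument that matches the $1/r$ shape appearing in the hypothesis and conclusion.
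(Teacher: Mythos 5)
Your proof is correct, and it follows a genuinely different route from the paper's. The paper splits the Cholesky factor as $\tilde R = D+T$ with $D$ diagonal and $T$ strictly upper triangular, bounds the ``off-diagonal'' part $T^TD+DT+T^TT$ in norm by $(r-1)(1-\delta^2+\varepsilon_{\h})$ (using that $\trace(T^TD+DT)=0$ so its eigenvalues sum to zero), and then treats $\Pi_{<\mu}(C,C)=D^2+\tilde E$ as a perturbation of $D^2$, inverting via $\|D^{-2}\|_2\|(I+D^{-2}\tilde E)^{-1}\|_2$. You instead work directly with the eigenvalues of $\tilde R^T\tilde R$: the same norm bound $\|\tilde R^T\tilde R\|_2\le 1+\varepsilon_{\h}$ caps the top $r-1$ eigenvalues, the Frobenius-norm inequality $\trace(\tilde R^T\tilde R)\ge\sum_i\tilde r_{ii}^2\ge r\delta^2$ supplies the trace lower bound, and Weyl's inequality transfers the resulting bound $\lambda_{\min}(\tilde R^T\tilde R)\ge r\delta^2-(r-1)(1+\varepsilon_{\h})$ to $\Pi_{<\mu}(C,C)$ at a cost of $\|E\|_2\le\varepsilon_{\h}$, which lands exactly on $r(\delta^2-1+1/r-\varepsilon_{\h})$. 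Both arguments use the same two inputs (diagonal bounded below by $\delta$, norm bounded above by $1+\varepsilon_{\h}$) and a sum-of-eigenvalues-versus-maximum step, but yours is shorter, avoids the $D/T$ decomposition and the Neumann-series inversion, and makes the source of the $1/r$ in the hypothesis more transparent; the paper's version has the minor advantage of exhibiting $\Pi_{<\mu}(C,C)$ explicitly as a small perturbation of the diagonal matrix $D^2$, which is a slightly more structural piece of information. Your closing remark about avoiding the determinant bound is apt --- that route would indeed only give an exponentially weak estimate.
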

\begin{proof}
 Using~\eqref{eq:error_chol} and~\eqref{eq:bound_norm_sp}, we obtain
 \[
  \|\tilde R^T\tilde R\|_2 \le \|\Pi_{<\mu}(C, C)\|_2 + \|E\|_2 \le \|\Pi_{<\mu}\|_2 + \|E\|_2 \le 1 + \varepsilon_{\h}.
 \]
  We now decompose $\tilde R = D+T$, such that $D$ is diagonal with $d_{ii}  = \tilde r_{ii} \ge \delta$ and $T$ is strictly upper triangular. Then
 \[
  \|D^2 + T^TD+DT + T^TT \|_2 \le 1 + \varepsilon_{\h}.
 \]
 Because the matrix on the left is symmetric, this implies
 \[
  \lambda_{\max}( D^2 + T^TD +DT + T^TT) \le 1 + \varepsilon_{\h} \ \Rightarrow\ \lambda_{\max}( T^TD +DT + T^TT) \le 1 -\delta^2 + \varepsilon_{\h}.
 \]
 On the other hand,
 \[
  \lambda_{\min}( T^TD +DT + T^TT) \ge \lambda_{\min}( T^TD +DT) \ge -(r-1) \lambda_{\max} ( T^TD +DT ) \ge - (r-1)(1 -\delta^2 + \varepsilon_{\h}),
 \]
 where the second inequality uses that the trace of $T^TD +DT$ is zero and hence its eigenvalues sum up to zero. In summary,
 \[
  \|T^TD +DT + T^TT\|_2 \le (r-1)(1 -\delta^2 + \varepsilon_{\h}),
 \]
 and $\Pi_{<\mu}(C, C) = D^2 + \tilde E$ with $\|\tilde E\|_2 \le (r-1)(1 -\delta^2) + r\varepsilon_{\h}$. This completes the proof because
 \begin{eqnarray*}
  \|\Pi_{<\mu}(C, C)^{-1}\|_2 &\le& \|D^{-2}\|_2 \|( I + D^{-2} \tilde E)^{-1}\|_2 \le \frac{1}{\delta^2 - (r-1)(1 -\delta^2) - r\varepsilon_{\h}} \\
  &=& \frac{1}{r}\frac{1}{\delta^2 - 1 + 1/r - \varepsilon_{\h}},
  \end{eqnarray*}
 where the inverse exists under the conditions of the lemma.
\end{proof}
The following theorem shows that the columns $\Pi_{<\mu}(:, C)$ selected by Algorithm~\ref{alg:cholp_col_sel} have an excellent condition number if $\delta$ is sufficiently close to one and the 
perturbations introduced by the HODLR arithmetic remain small.
\begin{theorem}
\label{thm:cond_chol}
Let $C$ denote the set of $r$ indices returned by Algorithm~\ref{alg:cholp_col_sel} and suppose that the conditions~\eqref{eq:error_idempotent} and~\eqref{eq:error_chol} as well as the condition of Lemma~\ref{lemma:invR} are satisfied. Then it holds for the $2$-norm condition number of $\Pi_{<\mu}(:, C)$ that
 \begin{equation*}
  \kappa(\Pi_{<\mu}(:, C))  \leq \frac1r \frac{1 + \varepsilon_{\h}}{\delta^2 - 1 + 1/r - \varepsilon_{\h}} = 1+2r(1-\delta) + \mathcal O((1-\delta)^2 + \varepsilon_{\h}).
 \end{equation*}
\end{theorem}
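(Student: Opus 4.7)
The plan is to bound $\kappa(X)$ for $X := \Pi_{<\mu}(:,C)$ by reducing everything to the $r\times r$ principal submatrix $\Pi_{<\mu}(C,C)$, whose inverse is already controlled by Lemma~\ref{lemma:invR}. The crucial identity comes from exploiting the symmetry of $\Pi_{<\mu}$ together with the near-idempotency~\eqref{eq:error_idempotent}, namely
\[
 X^T X \;=\; \Pi_{<\mu}(C,:)\,\Pi_{<\mu}(:,C) \;=\; (\Pi_{<\mu}^2)(C,C) \;=\; \Pi_{<\mu}(C,C) + F(C,C),
\]
so the squared singular values of $X$ differ from the eigenvalues of $\Pi_{<\mu}(C,C)$ only by a symmetric perturbation of $2$-norm at most $\varepsilon_{\h}$.

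For the upper bound I would simply use that $X$ is a column submatrix of $\Pi_{<\mu}$, hence $\sigma_{\max}(X) \le \|\Pi_{<\mu}\|_2 \le 1 + \varepsilon_{\h}$ by~\eqref{eq:bound_norm_sp}. For the lower bound on $\sigma_{\min}(X)$, Weyl's inequality applied to the identity above together with Lemma~\ref{lemma:invR} gives
\[
 \sigma_{\min}(X)^2 \;\ge\; \lambda_{\min}\bigl(\Pi_{<\mu}(C,C)\bigr) - \varepsilon_{\h} \;\ge\; r\bigl(\delta^2 - 1 + 1/r - \varepsilon_{\h}\bigr) - \varepsilon_{\h}.
\]
The non-obvious step, needed to avoid an unwanted square root in the final formula, is to observe that since $X$ approximates a projector its singular values are themselves bounded by $1+\varepsilon_{\h}$, so $\sigma_{\min}(X) \le 1 + \varepsilon_{\h}$ and consequently
\[
 \sigma_{\min}(X) \;\ge\; \frac{\sigma_{\min}(X)^2}{\sigma_{\max}(X)} \;\ge\; \frac{\sigma_{\min}(X)^2}{1+\varepsilon_{\h}}.
\]
Dividing $\sigma_{\max}(X) \le 1+\varepsilon_{\h}$ by this lower bound and absorbing the trailing $-\varepsilon_{\h}$ of the denominator into the $-\varepsilon_{\h}$ that already appears through Lemma~\ref{lemma:invR} produces the displayed bound $\tfrac{1}{r}\tfrac{1+\varepsilon_{\h}}{\delta^2-1+1/r-\varepsilon_{\h}}$.

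Finally the asymptotic expansion $1 + 2r(1-\delta) + \mathcal O((1-\delta)^2 + \varepsilon_{\h})$ is a routine first-order Taylor expansion at $\delta=1$, $\varepsilon_{\h}=0$, using $1-\delta^2 = 2(1-\delta) + \mathcal O((1-\delta)^2)$. I expect the main delicate point to be the bookkeeping of the two \emph{independent} perturbations $E$ (from the HODLR block Cholesky, via~\eqref{eq:error_chol}) and $F$ (from the nearly idempotent projector, via~\eqref{eq:error_idempotent}): both enter Lemma~\ref{lemma:invR} only through $\varepsilon_{\h}=\|E\|_2+\|F\|_2$, but in the present proof the copy of $F$ appearing in $X^TX$ must be tracked separately, and it is precisely the step replacing $\sigma_{\min}$ by $\sigma_{\min}^2/\|X\|_2$ (valid only because $X$ is almost a projector) that prevents the final bound from degrading to $1+\mathcal O(\sqrt{1-\delta}+\sqrt{\varepsilon_{\h}})$.
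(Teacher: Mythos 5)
Your argument is sound and reaches the stated conclusion, but it takes a genuinely different route from the paper for the key step, the lower bound on $\sigma_{\min}(\Pi_{<\mu}(:,C))$. The paper never invokes the near-idempotency at this point: it simply observes that $\Pi_{<\mu}(C,C)$ is the row-subselection of $\Pi_{<\mu}(:,C)$ by the index set $C$, so that deleting rows cannot increase the smallest singular value, i.e.\ $\Vert \Pi_{<\mu}(:,C)^{\dagger}\Vert_2 \le \Vert \Pi_{<\mu}(C,C)^{-1}\Vert_2$, and then applies Lemma~\ref{lemma:invR} directly. Multiplying by the norm bound~\eqref{eq:column_norm} gives the displayed constant \emph{exactly}. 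Your route instead goes through the identity $X^TX = (\Pi_{<\mu}^2)(C,C) = \Pi_{<\mu}(C,C) + F(C,C)$, Weyl's inequality, and the trick $\sigma_{\min}(X) \ge \sigma_{\min}(X)^2/\sigma_{\max}(X)$. Each of these steps is valid, and the idea of using $\sigma_{\min}\ge\sigma_{\min}^2/\sigma_{\max}$ to dodge the square root is nice, but the bookkeeping does not quite land on the paper's constant: you end up with
\[
\kappa(\Pi_{<\mu}(:,C)) \;\le\; \frac{(1+\varepsilon_{\h})^2}{\,r\bigl(\delta^2-1+1/r-\varepsilon_{\h}\bigr)-\Vert F\Vert_2\,},
\]
i.e.\ a squared numerator and an extra $-\Vert F\Vert_2$ in the denominator. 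The phrase ``absorbing the trailing $-\varepsilon_{\h}$'' glosses over this; the displayed exact bound of the theorem is not literally obtained, only a bound that agrees with it to first order, which does suffice for the asymptotic form $1+2r(1-\delta)+\mathcal O((1-\delta)^2+\varepsilon_{\h})$. What the paper's approach buys is both the sharp constant and a shorter proof (the perturbation $F$ enters only through Lemma~\ref{lemma:invR}, never a second time); what your approach buys is an explicit handle on $X^TX$ itself, which makes transparent why orthogonalizing $\Pi_{<\mu}(:,C)\tilde R^{-1}$ in~\eqref{eq:onb_delta} is numerically safe. If you keep your route, state the weaker explicit constant honestly rather than matching the paper's by fiat.
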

\begin{proof}
By definition, $\kappa(\Pi_{<\mu}(:, C)) = \Vert \Pi_{<\mu}(:, C)\Vert_2 \Vert \Pi_{<\mu}(:, C)^{\dagger}\Vert_2$. From~\eqref{eq:bound_norm_sp} we get \begin{equation}
\label{eq:column_norm}
\Vert \Pi_{<\mu}(:, C)\Vert_2 \leq \Vert \Pi_{\mu} \Vert_2 \leq 1+ \Vert F\Vert_2. 
\end{equation}
To bound the second factor, we note that 
$
  \Vert \Pi_{<\mu}(:, C)^{\dagger} \Vert_2 \le \Vert \Pi_{<\mu}(C, C)^{-1} \Vert_2$ and apply Lemma~\ref{lemma:invR}. Using the two bounds, 
the statement follows.  
\end{proof}
The condition of Lemma~\ref{lemma:invR}, $1-\delta^2 \lesssim 1/r$, requires $\delta$ to be very close to $1$. We conjecture that this condition can be 
improved to a distance that is proportional to $1/\log_2 r$ or even a constant independent of $r$. The latter is what we observe in the numerical experiments; choosing $\delta$ constant and letting $r$ grow does 
not lead to a deterioration of the condition number.

\subsection{Range correction}

As earlier, let $C$ denote a set of indices obtained by Algorithm~\ref{alg:cholp_col_sel} for a threshold $\delta$, and $r = \vert C\vert$. We recall that 
the dimension of the column space of $\Pi_{<\mu}$ can be easily computed knowing that $\trace(\Pi_{<\mu}) = \rank(\Pi_{<\mu}) = \nu$.  
If  $r = \nu$, then it only remains to perform the orthogonalization to get an orthonormal basis of $\range(\Pi_{<\mu})$. However, depending on the choice of $\delta$, in practice it 
can occur that the cardinality of $C$ is smaller 
than $\nu$, which implies that the selected columns cannot span the column space of $\Pi_{<\mu}$. In this case additional vectors need to be computed to get a complete orthonormal basis for $\range(\Pi_{<\mu})$. 
 
An orthonormal basis for $\range(\Pi_{<\mu}(:, C))$ in the HODLR format can be computed using a method suggested in~\cite{Lintner2002}, and it is given as 
\begin{equation}
\label{eq:onb_delta}
 \Pi_{<\mu}(:, C)*_{\h} \tilde{R}^{-1}.
\end{equation}

The biggest disadvantage of the method in~\cite{Lintner2002} is the loss the orthogonality in badly conditioned problems, caused by squaring of the condition number when computing $\tilde{R}$. However, 
choosing only well-conditioned subset of columns of $\Pi_{<\mu}$ allows us to avoid dealing with badly conditioned problems, and thus prevents potential loss of orthogonality in~\eqref{eq:onb_delta}.  

In case $r < \nu$, we complete the basis~\eqref{eq:onb_delta} to an orthonormal basis for $\range(\Pi_{<\mu})$, by computing
an orthonormal basis of the orthogonal complement of $\range(\Pi_{<\mu}(:, C))$ in  $\range(\Pi_{<\mu})$. First we detect the orthogonal complement of $\range(\Pi_{<\mu}(:, C))$ in  
$\range(\Pi_{<\mu})$.

\begin{lemma}
 \label{thm:orth_complement_cdelta}
If $(\range (\Pi_{<\mu}(:, C)))^{\bot}$ is the orthogonal complement of $\range (\Pi_{<\mu}(:, C))$, then 
\begin{equation*}
\label{eq:complement_delta}
R^{\bot}_{\Pi_{<\mu}, C} : = (\range (\Pi_{<\mu}(:, C)))^{\bot}\cap \range(\Pi_{<\mu})
\end{equation*}
is the orthogonal complement of $\range (\Pi_{<\mu}(:, C))$ in the vector space $\range(\Pi_{<\mu})$. Moreover, $\dim(R^{\bot}_{\Pi_{<\mu}, C}) = \rank(\Pi_{<\mu}) - r$. 
\end{lemma}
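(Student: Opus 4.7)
The plan is to observe first that $\range(\Pi_{<\mu}(:,C))$ is a subspace of $\range(\Pi_{<\mu})$: each selected column equals $\Pi_{<\mu}e_i$ for some standard basis vector $e_i$, hence lies in the range of $\Pi_{<\mu}$. Once this inclusion is in place, the first claim reduces to the elementary fact that, given nested subspaces $U \subseteq V \subseteq \R^n$, the orthogonal complement of $U$ inside $V$ coincides with $U^{\bot} \cap V$, where $U^{\bot}$ denotes the orthogonal complement in the ambient $\R^n$. I would verify the two defining properties directly: any vector in $U^{\bot}\cap V$ lies in $V$ and is orthogonal to $U$ by construction; conversely, for arbitrary $v \in V$ the orthogonal projection onto $U$ stays inside $V$ because $U \subseteq V$, so the residual lies in $U^{\bot}\cap V$ and yields the orthogonal direct sum decomposition $V = U \oplus (U^{\bot}\cap V)$.

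Applying this with $U=\range(\Pi_{<\mu}(:, C))$ and $V = \range(\Pi_{<\mu})$ gives the first assertion. For the dimension formula I would then use the orthogonal direct sum
\[
\range(\Pi_{<\mu}) = \range(\Pi_{<\mu}(:, C)) \oplus R^{\bot}_{\Pi_{<\mu}, C},
\]
which reduces the problem to computing $\dim \range(\Pi_{<\mu}(:, C))$. Here I would invoke Theorem~\ref{thm:cond_chol}: under the stated assumptions on $\delta$ and the HODLR truncation, the $r$ selected columns have finite condition number and are therefore linearly independent, so $\dim \range(\Pi_{<\mu}(:, C)) = r$. Subtracting dimensions yields $\dim(R^{\bot}_{\Pi_{<\mu}, C}) = \rank(\Pi_{<\mu}) - r$ as claimed.

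The only real subtlety is the second step of the dimension argument, namely that $\Pi_{<\mu}(:, C)$ has full column rank $r$. This is not a statement about the spectral projector alone but about the output of Algorithm~\ref{alg:cholp_col_sel}, and it is exactly what Theorem~\ref{thm:cond_chol} delivers. Everything else is routine Euclidean geometry, and no additional machinery is required.
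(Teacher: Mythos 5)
Your proof is correct and there is nothing to compare it against in substance: the paper's own proof reads, in full, ``The statements follow directly from the definition of $R^{\bot}_{\Pi_{<\mu}, C}$,'' so your write-up simply supplies the routine Euclidean-geometry details the authors omit. The one genuinely substantive ingredient you correctly isolate --- that $\Pi_{<\mu}(:,C)$ has full column rank $r$, so that $\dim\range(\Pi_{<\mu}(:,C))=r$ --- is left implicit in the paper but is indeed guaranteed by the preceding analysis (the invertibility of $\Pi_{<\mu}(C,C)$ in Lemma~\ref{lemma:invR}, or equivalently the finite condition number in Theorem~\ref{thm:cond_chol}), so invoking it is the right move.
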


\begin{proof}

The statements follow directly from the definition of $R^{\bot}_{\Pi_{<\mu}, C}$.
\end{proof}

Using~\eqref{eq:onb_delta} we construct an orthogonal projector
\begin{equation}
\label{eq:orth_projection_compl}
P_{C^{\bot}} =  I - \Pi_{<\mu}(:, C)*_{\h} \tilde{R}^{-1}*_{\h}(\Pi_{<\mu}(:, C)*_{\h} \tilde{R}^{-1})^T
\end{equation}
onto  $(\range (\Pi_{<\mu}(:, C)))^{\bot}$. From~\eqref{eq:onb_delta} it steadily follows that $\range (P_{C^{\bot}} \Pi_{<\mu}) = R^{\bot}_{\Pi_{<\mu}, C}$.  Thus
computing an orthonormal basis for $P_{C^{\bot}} \Pi_{<\mu}$ will allow us to obtain a complete orthonormal basis for $\range(\Pi_{<\mu})$.  

To this end, we employ a randomized algorithm~\cite{HalkoMartinsson2011} 
to compute an orthonormal basis of dimension $ \nu - r$ for $\range(P_{C^{\bot}} \Pi_{<\mu})$: (1) we first 
multiply $P_{C^{\bot}} \Pi_{<\mu}$ with a random matrix $X\in \R^{n\times (\nu - r +p)}$, where $p$ is an oversampling parameter; (2) we compute its QR decomposition. As singular values of $\Pi_{<\mu}$ are either 
unity or zero, multiplication with the orthogonal projector $P_{C^{\bot}}$, generated by the linearly independent columns $C$, gives a matrix whose singular values are well-separated as well. In particular, the
resulting matrix has $\nu - r$ singular values equal to $1$, and the others equal to zero. Indeed, in exact arithmetics $P_{C^{\bot}} \Pi_{<\mu}$ has the exact 
rank $\nu - r$, and then oversampling is not required~\cite{HalkoMartinsson2011}. However, due to the formatted arithmetics, we use a small oversampling parameter $p$ to improve accuracy. As
we require only $ \nu -r$ columns to complete the basis for $\range(\Pi_{<\mu})$, finally we keep only the first $\nu - r$ columns of the orthonormal factor.

 A pseudo-code for computing a complete orthonormal basis for $\range(\Pi_{<\mu})$ is given in Algorithm~\ref{alg:basis_complete}. Note  
 that $\Pi_{<\mu}(:, C)$ is a rectangular HODLR matrix, obtained by extracting columns 
 with indices $C$ of a HODLR matrix, as explained in section~\ref{sec:hodlr}. This implies that 
 the complexity of operations stated in Table~\ref{table:complexity_HODLR} carries over for the operations involving HODLR matrices in Algorithm~\ref{alg:basis_complete}. The complexity of the algorithm also depends on 
 the number of the missing basis vectors. However, in our experiments we observe that $\nu - r$ is very small with respect to $\nu$ and $n$ for choice of $\delta$ we use, which makes the 
 cost of operations in Line~\ref{basis_comp_project} and Line~\ref{qr_correct} negligible. In the setup when $\nu\approx n/2$, the overall complexity of Algorithm~\ref{alg:basis_complete} is governed by solving
 a triangular system in Line~\ref{orth_updated} or Line~\ref{orth}, i.e. it is $\calO(k^2 n\log^2n)$. 

\begin{algorithm}[ht!]
    \caption{Computation of a complete orthonormal basis for $\range(\Pi_{<\mu})$}
    \label{alg:basis_complete}

    \textbf{Input:} Spectral projector $\Pi_{<\mu}\in \R^{n\times n}$ in the HODLR format with $\rank(\Pi_{<\mu}) = \nu$, column indices $C$ and the Cholesky factor $\tilde R$ returned by 
    Algorithm~\ref{alg:cholp_col_sel}, an oversampling 
     parameter $p$. \\
    \textbf{Output:} Orthonormal matrix $Q_{<\mu}\in \R^{n\times \nu}$ such that $\range(Q_{<\mu}) = \range(\Pi_{<\mu})$.  
  \begin{algorithmic}[1]
    \If{$\vert C\vert < \nu$}
    \State Generate a random matrix $X \in \R^{n\times (\nu - r +p)}$, for $r = \vert C\vert$.
    \State $ Z =  \Pi_{<\mu} X - \Pi_{<\mu}(:, C)(\tilde{R}^{-1}(\tilde{R}^{-T}(\Pi_{<\mu}(C,:)(\Pi_{<\mu}X))))$. \label{basis_comp_project}
    \State Compute $[Q_c, \sim,\sim] = {\tt qr}(Z,0)$. \label{qr_correct}
    \State Return $Q_{<\mu} = [\Pi_{<\mu}(:, C)*_{\h}\tilde{R}^{-1}\quad Q_c(:, 1:\nu - r)]$. \label{orth_updated} 
   \Else     
   \State Return $Q_{<\mu} = [\Pi_{<\mu}(:, C)*_{\h}\tilde{R}^{-1}]$. \label{orth}
   \EndIf
  \end{algorithmic}
\end{algorithm}

\subsubsection{Storing additional columns}
When range correction is performed, we additionally need to store tall-and-skinny matrix $Q_c$ from Algorithm~\ref{alg:basis_complete}. The idea is to incorporate columns of $Q_c$
into an existing HODLR matrix $\Pi_{<\mu}(:, C)*_{\h}\tilde{R}^{-1}$ of size $n\times r$ to get a HODLR matrix of size $n\times \nu$. More specifically, we append $\nu - r$ columns after 
the last column of $\Pi_{<\mu}(:, C)*_{\h}\tilde{R}^{-1}$, by enlarging all blocks of $\Pi_{<\mu}(:, C)*_{\h}\tilde{R}^{-1}$ that contain the last column. Recompression is performed when updating the off-diagonal 
blocks. It is expected that the off-diagonal ranks in the updated blocks grow, however, numerical experiments in section~\ref{sec:experiments} demonstrate that the increase is not significant.  

%

\section{Divide-and-conquer method in the HODLR format}
\label{sec:dc_hodlr}

In this section we give the overall spectral divide-and-conquer method for computing the eigenvalue decomposition of a symmetric banded matrix $A\in \R^{n\times n}$. 
For completeness, we also include a pseudocode given in Algorithm~\ref{alg:hsdc}. In the following we discuss several details related to
its implementation and provide the structure of the eigenvectors matrix. 

\begin{algorithm}[ht!]
    \caption{Spectral divide-and-conquer algorithm in the HODLR format ({\tt hSDC})}
    \label{alg:hsdc}
    \textbf{Input:} A symmetric banded or HODLR matrix $A\in \R^{n\times n}$.  
    
  \textbf{Output:} A structured matrix $Q$ containing the eigenvectors of $A$ and a diagonal matrix $\Lambda$ containing the eigenvalues of $A$.
  
  \begin{algorithmic}[1]
  \Function {$[Q,\Lambda] =  $\tt hsdc} {$A$}
   
   \If{$n \leq n_{\stp}$}
     \State $[Q, \Lambda] = {\tt eig}(A)$.  \label{heig}
    \Else

   \State Compute $\mu = {\tt median}(\diag (A))$.   \label{hshift}
   \State Compute $\Pi_{<\mu}$ and $\Pi_{>\mu}$ in the HODLR format by applying Algorithm~\ref{alg:hdwh} to $A-\mu I$. \label{hsp2}
 
   \State Compute column indices $C_{<\mu}$ and $C_{>\mu}$ by applying Algorithm~\ref{alg:cholp_col_sel} to $\Pi_{<\mu}$ and $\Pi_{>\mu}$. \label{hindices}
   \State Compute $Q_{<\mu}$ and $Q_{>\mu}$ by applying Algorithm~\ref{alg:basis_complete} to $\Pi_{<\mu}, C_{<\mu}$, and $\Pi_{>\mu}, C_{>\mu}$. \label{honb}
   \State Form $A_{<\mu} = Q_{<\mu}^{T}*_{\h}A*_{\h}Q_{<\mu}$ and $A_{>\mu} = Q_{>\mu}^{T}*_{\h}A*_{\h}Q_{>\mu}$. \label{hdc}
   \State Call recursively {$[Q_1, \Lambda_1]$ = \tt hsdc}$(A_{<\mu})$ and {$[Q_2, \Lambda_2]$  = \tt hsdc}$(A_{>\mu})$. 
   \State Set $Q \gets \begin{bmatrix} Q_{<\mu}  & Q_{>\mu} \end{bmatrix}*_{\h} \begin{bmatrix} Q_1 &0 \\ 0 &Q_2\end{bmatrix} $ and  $\Lambda = \begin{bmatrix} \Lambda_1 & 0 \\ 0 & \Lambda_2 \end{bmatrix}$.
   \EndIf  
   \EndFunction
 \end{algorithmic}
\end{algorithm}

\subsection{Computing the shift}
The purpose of computing shift $\mu$ is to split a problem of size $n$ into a two smaller subproblems of roughly the same size. In this work, the computation of 
a shift is performed by computing the median of $\diag(A)$, as proposed in~\cite{NakaHigh2013}. Although this way of estimating the median of eigenvalues may not be optimal, 
it is a cheap method and gives reasonably good results. For more details regarding the shift computation, we refer the reader to a discussion in~\cite{NakaHigh2013}. Moreover, we note that it remains an 
open problem to develop a better strategy for splitting the spectrum. 
 
\subsection{Terminating the recursion}
We stop the recursion when the matrix attains the minimal prescribed size $n_{\operatorname{stop}}$, and use \Matlab{} built-in function {\tt eig} to perform the final step of diagonalization.  
In a practical implementation, we set $n_{\operatorname{stop}}$ depending on the breakeven point of {\tt hQDWH} relative to {\tt eig} obtained in~\cite{KressnerSus2017}.  

\subsection{Matrix of eigenvectors}
For simplicity, without loss of generality we assume that for size of a given matrix $A$ holds $n = 2^s n_{\stp}$, for $s\in \N$. We say that 
Algorithm~\ref{alg:hsdc} performed level $l$ divide step, with $0\leq l < s$,  
if all matrices of size $n/2^l$ had been subdivided. 
 
The eigenvectors matrix is given as an implicit product of orthonormal HODLR matrices. After level $l$ divide step of Algorithm~\ref{alg:hsdc}, structured matrix $Q$ has the form
 $$Q = Q^{(0)}*_{\h}Q^{(1)}*_{\h}\cdots *_{\h}Q^{(l)}. $$
 $Q^{(i)} \in \R^{n\times n}$, $0 \leq i\leq l $, is a block-diagonal matrix with $2^{i}$ diagonal blocks, where each diagonal block is an orthogonal matrix of the form $[H_1 \hskip 2pt H_2]$, with $H_1, H_2$ orthonormal 
HODLR matrices computed in Line~\ref{honb} of Algorithm~\ref{alg:hsdc}. The computation of the eigenvectors matrix is completed by computing $Q^{(s)}$, a block-diagonal orthogonal matrix with $2^s$ orthogonal dense 
 diagonal blocks that are computed in Line~\ref{heig} of Algorithm~\ref{alg:hsdc}.    
  
The overall storage required to store $Q$ equals to the sum of memory requirements for matrices $Q^{(i)},   0\leq i \leq  s$. Assume that the off-diagonal ranks occurring in matrices 
$Q^{(i)}$, $ 0\leq i < s$, are bounded by $\tilde{k}$. To determine the storage, we use that $Q^{(i)}$, for $0\leq i<s$, has $2^i$ diagonal blocks of the form $[H_1 \hskip 2pt H_2]$, where the storage of both $H_1$ and $H_2$ requires 
$\calO(\tilde{k}\frac{n}{2^{i}}\log_2 \frac{n}{2^{i}})$ memory. Hence we get 
that the storage for matrices $Q^{(i)}$, $0\leq i<s$, adds up to 
\begin{align}
  \sum_{l=0}^{s-1} \tilde{k} 2^{l+1}\frac{n}{2^{l}}\log_2{\frac{n}{2^l}} &= 2\tilde{k} n\sum_{l = 0}^{s-1} \log_2 \frac{n}{2^l}
= 2\tilde{k} n\left(s\log_2 n - \frac{(s-1)s}{2} \right)\nonumber\\
&= \tilde{k} n \log^2_2 \frac{n}{n_{\stp}} + \log_2\frac{n}{n_{\stp}} (\log_2 n_{\stp} +1/2)\label{eq:storage_Q}\text{.}
 \end{align}
Moreover, the storage of $Q^{(s)}$ requires $2^s n^2_{\stp} = n n_{\stp}$ units of memory. Hence, from the latter and~\eqref{eq:storage_Q} follows that the 
overall memory needed for storing $Q$ is $\calO(\tilde{k} n\log^2n)$. 

\subsection{Computational complexity}
Now we derive the theoretical complexity of Algorithm~\ref{alg:hsdc}, based on the complexity of operations given in Table~\ref{table:complexity_HODLR}. The numerical results in 
section~\ref{sec:experiments} give an insight how the algorithm behaves in practice, and confirm theoretical results. 

We first note that for a HODLR matrix of size $m$ and rank $k$ the complexity of one divide step, computed in Line~\ref{hshift}--Line~\ref{hdc}, is $\calO(k^2m\log_2^2m)$. When 
performing level $l$ divide step, the computation involves 
$2^l$ HODLR matrices of size $n/2^l\times n/2^l$. Denoting with $\tilde{k}$ an upper bound for the off-diagonal ranks appearing in the process, similarly as in the previous section we
derive the complexity of our algorithm: 
\begin{align*}
  \sum_{l=0}^{s-1} \tilde{k}^2 2^{l}\frac{n}{2^{l}}\log_2^2{\frac{n}{2^l}} &= \tilde{k}^2 n\sum_{l = 0}^{s-1} \log_2^2 \frac{n}{2^l}
= \tilde{k}^2 n\left(s\log_2 n (\log_2 n_{\stp} +1) + \frac{(s-1)s(2s-1)}{6} \right)\nonumber\\
&\approx \calO(\tilde{k}^2 n \log^3_2 n)\label{eq:alg_complexity} \text{.}
 \end{align*}
At the final level of recursive application of Algorithm~\ref{alg:hsdc}, when the algorithm is applied to matrices of size not larger than $n_{\stp}$, the 
complexity comes from diagonalizing $2^s$ dense matrices, i.e., equals to $\calO(nn^2_{\stp})$. Thus the overall complexity of Algorithm~\ref{alg:hsdc} is $\calO(\tilde{k}^2 n\log^3n)$. 
\section{Numerical experiments}
\label{sec:experiments}

In this section, we show the performance of our \Matlab{} implementation of the spectral divide-and-conquer method in the HODLR format for various matrices. All 
computations were performed in \Matlab{} version R2016b on a machine with the dual Intel Core i7-5600U 2.60GHz CPU,  $256$ KByte of level 2 cache  and $12$ GByte of RAM.   

In order to draw a fair comparison with respect to highly optimized \Matlab{} built-in functions, all experiments were carried out on a single core. The memory requirements shown in 
Example~\ref{ex:scalability} are obtained experimentally, using \Matlab{} built-in functions. 

In all experiments, we set the truncation tolerance to $\epsilon = 10^{-10}$, the minimal block-size $n_{\min} = 250$ for tridiagonal matrices and $n_{\min} = 500$ for $b$-banded matrices with $b>1$.  Moreover, 
the stopping tolerance in the {\tt hDWH} algorithm is set to $\varepsilon = 10^{-15}$. In  Algorithm~\ref{alg:basis_complete} we use the oversampling parameter $p = 10$.  
 We use breakeven points in~\cite{KressnerSus2017} to set the termination criterion in Algorithm~\ref{alg:hsdc}. For tridiagonal matrices we use $n_{\stp} = 3250$, 
 for $2$-banded matrices $n_{\stp}= 1750$  and  $n_{\stp} = 2500$ for $b$-banded with $b > 2$.   

The efficiency of our algorithm is tested on a set of matrices coming from applications, as well as on various synthetic matrices. 

\subsection{Generation of test matrices}
\label{sec:matrix_gen}
To generate synthetic matrices, we employ the procedure explained 
in~\cite[section $6$]{KressnerSus2017} that uses a sequence of Givens rotations to obtain a symmetric banded matrix with a 
prescribed bandwidth and spectrum, starting from a diagonal matrix containing $n$ eigenvalues. As 
the accuracy of computed spectral projectors depends
on the relative spectral gap, we generate matrices such that $\gap$ is constant whenever the spectrum is split in half. We generate such a spectrum by first dividing the interval $[-1,1]$ into $[-1,-\gap]\cup [\gap, 1]$
and then recursively applying the same procedure to both subintervals. In particular, interval $[c,d]$ is split into $[c, \frac{c+d}{2} - \frac{d-c}{2}\gap]\cup [\frac{c+d}{2}+\frac{d-c}{2}\gap,d]$. 
The recursive division stops when the number of subintervals is $\leq n/n_{\stp}$. To each subinterval we assign equal number of eigenvalues coming from a uniform distribution. 
We observed similar results for eigenvalues coming from a geometric distribution, but 
we omit them to avoid redundancy. 

\begin{example}[\bfseries{Percentage and conditioning of selected columns}]
\label{ex:percentage_cond_sel_col} 
\rm We first investigate the percentage of selected columns throughout Algorithm~\ref{alg:hsdc} depending on a given threshold $\delta$, together with the condition number of the selected columns. We show results 
for matrices of size $n = 10240$, with bandwidths $b=1$ and $b=8$, and spectral gaps $\gap = 10^{-2}$ and $\gap = 10^{-6}$, generated as described above.  In 
this example we ensure that in all divide steps of 
Algorithm~\ref{alg:hsdc} the gap between separated parts of the spectrum corresponds to $\gap$, by computing the shift $\mu$ as the median of eigenvalues of a considered matrix. In each 
divide step in Algorithm~\ref{alg:hsdc} we compute 
the percentage of selected columns, and finally we show their average for each $\delta$. Moreover, we present a 
maximal condition number of the selected columns in the whole divide-and-conquer process for a given $\delta$. As expected, smaller values of $\delta$ lead to a 
higher percentage of selected columns, but this leads to a higher condition number as well.  Figure~\ref{fig:tridiag_perc_cond} and Figure~\ref{fig:band_8_perc_cond} show that already for $\delta \geq 0.4$
we get a good trade-off between the percentage of selected columns and the condition number. This also implies that the off-diagonal ranks in the eigenvectors matrix remain low.   

\begin{figure}[tbhp]
\begin{center}
\includegraphics[width=0.45\textwidth]{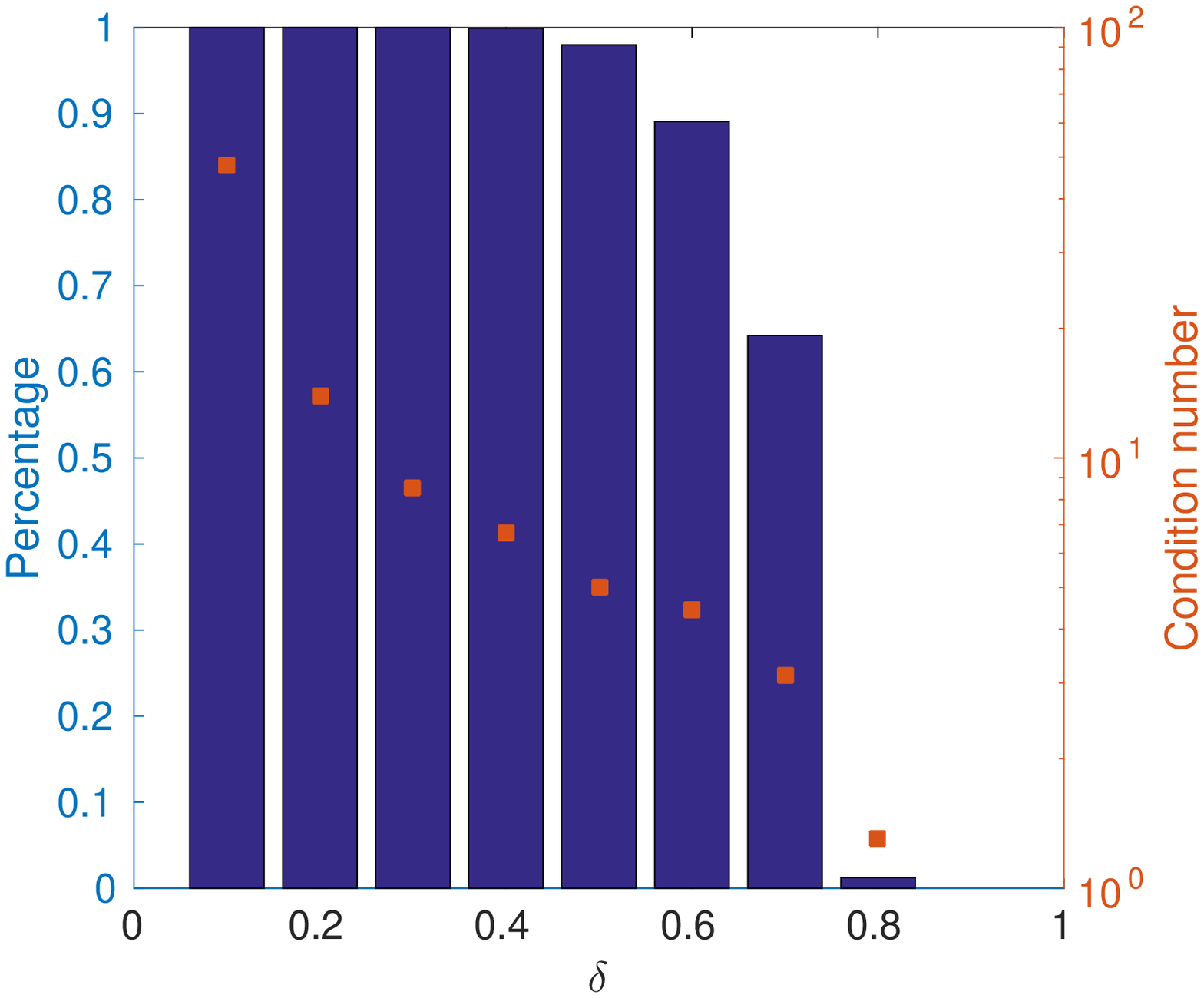}
\includegraphics[width=0.45\textwidth]{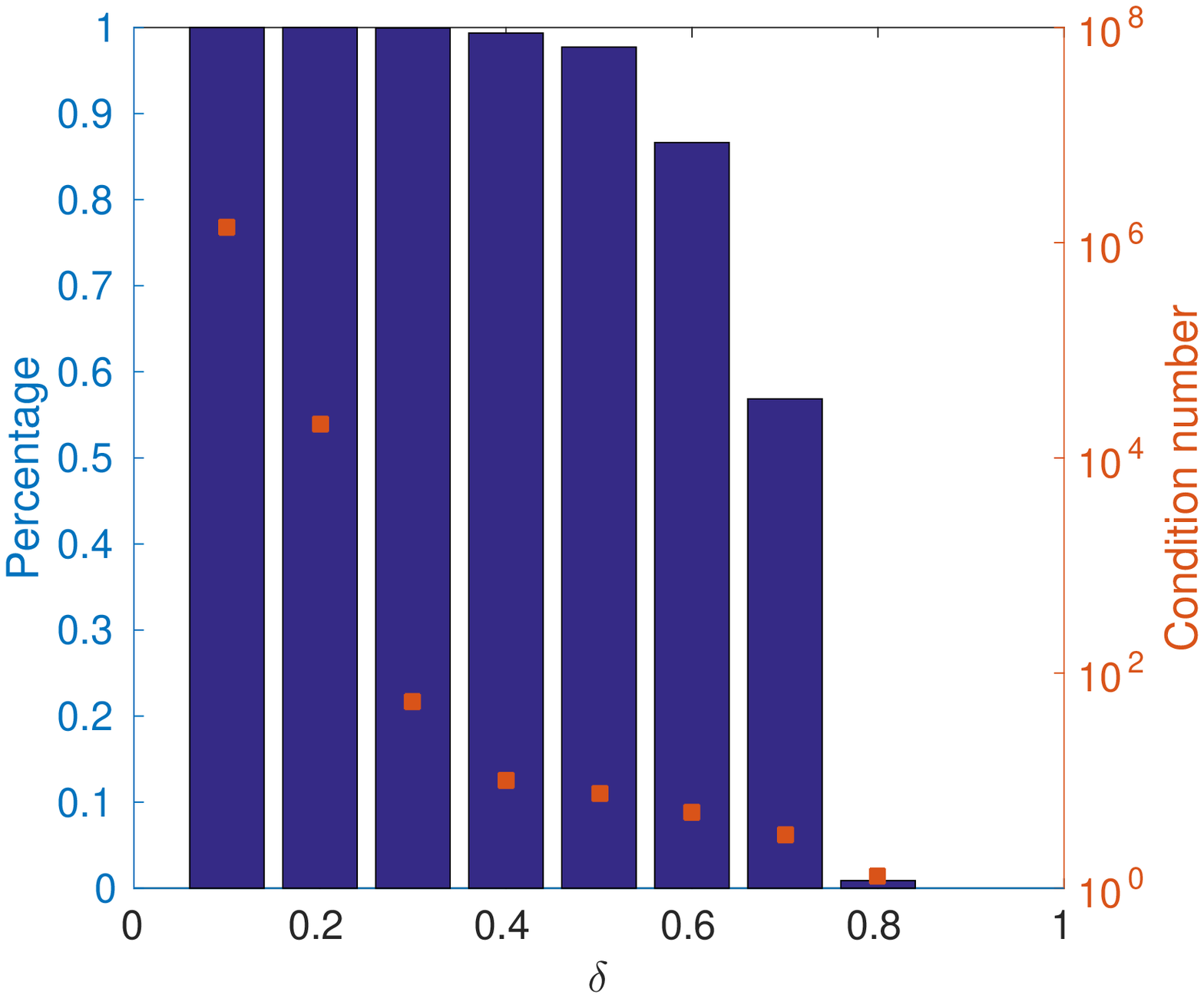}
\end{center}
\caption{Example~\ref{ex:percentage_cond_sel_col}. Percentage of selected columns and their condition number for a tridiagonal matrix with eigenvalues in $[-1,1]$ with relative spectral gap
$\gap = 10^{-2}$ (left) and $\gap = 10^{-6}$ (right).}
\label{fig:tridiag_perc_cond}
\end{figure}

\begin{figure}[tbhp]
\begin{center}
\includegraphics[width=0.45\textwidth]{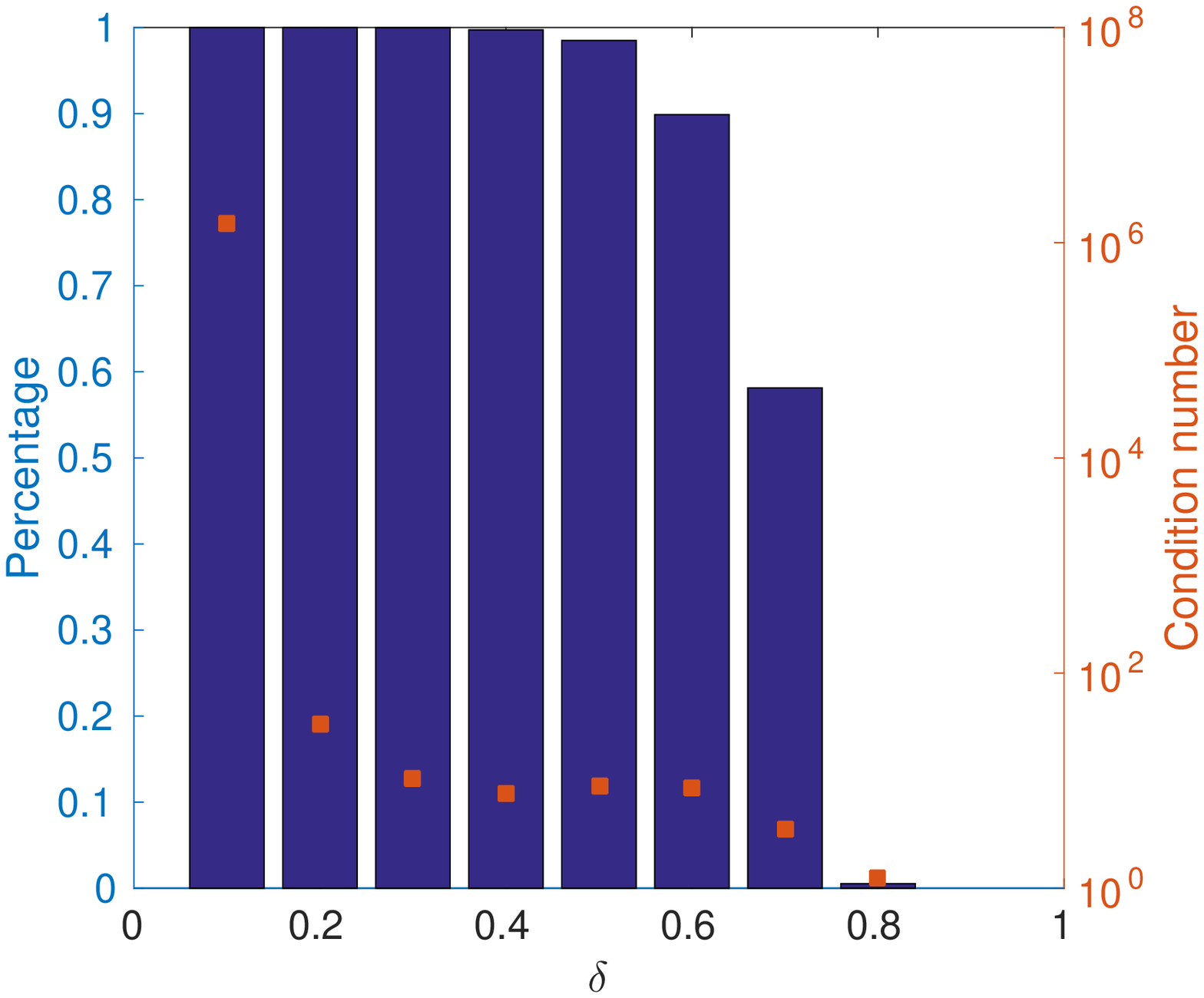}
\includegraphics[width=0.45\textwidth]{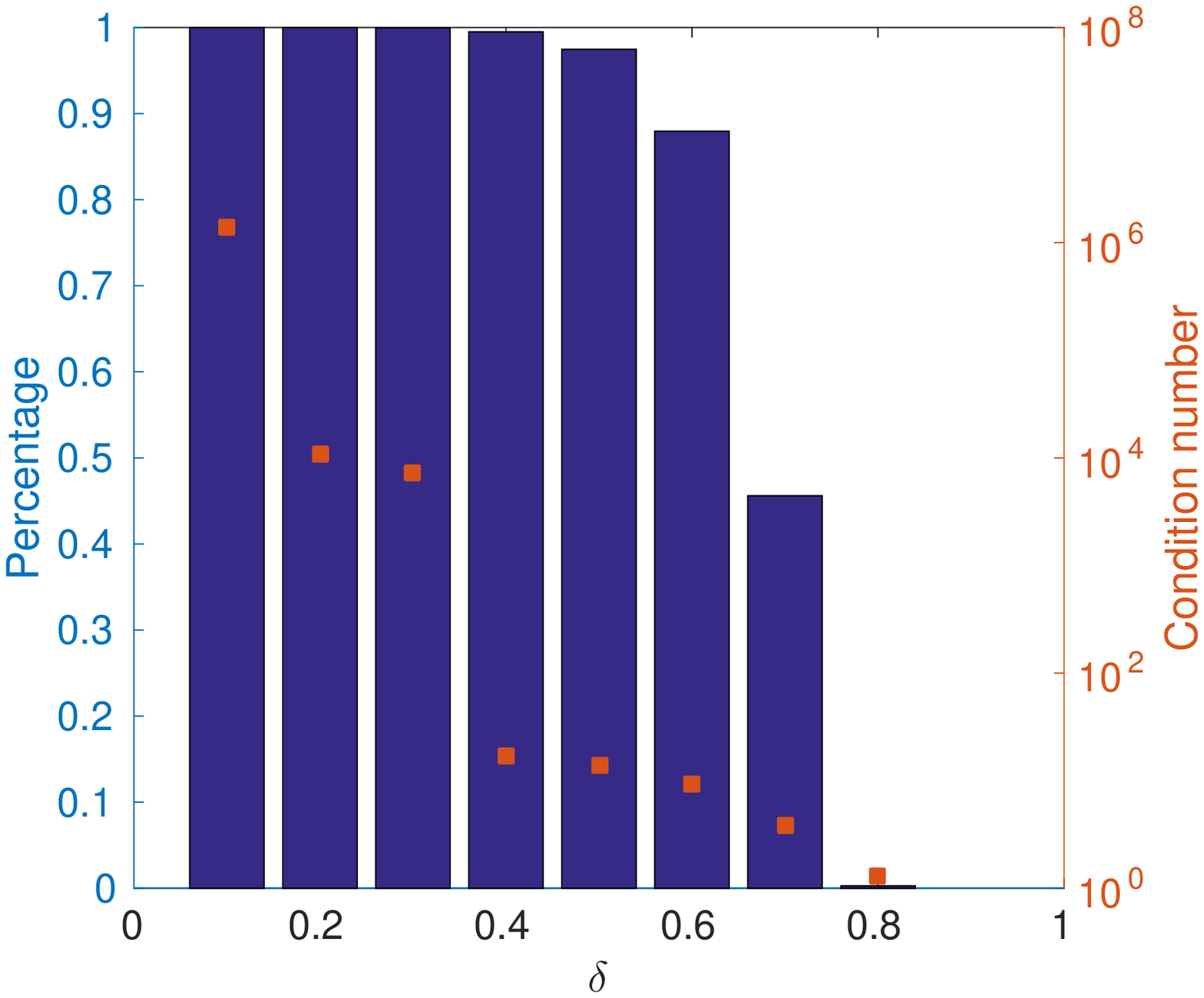}
\end{center}
\caption{Example~\ref{ex:percentage_cond_sel_col}. Percentage of selected columns and their condition number for a $8$-banded matrix with eigenvalues in $[-1,1]$ with relative spectral gap
$\gap = 10^{-2}$ (left) and $\gap = 10^{-6}$ (right).}
\label{fig:band_8_perc_cond}
\end{figure}
\end{example}

\begin{example}[\bfseries{Breakeven point relative to {\tt eig}}]
\label{ex:breakeven_point} 
\rm 
Our runtime comparisons are performed on generated $n\times n$ banded matrices.  We examine for which values of $n$ Algorithm~\ref{alg:hsdc} outperforms {\tt eig}.  In Table~\ref{table:break_even_point} we show
breakeven points for banded matrices constructed as in section~\ref{sec:matrix_gen}, with $\gap \in \lbrace 10^{-1},10^{-2}, 10^{-3}, 10^{-4}\rbrace$. We use the threshold parameter 
$\delta  = 0.4$.  
For $b=2$ and $b=4$ banded matrices our algorithm becomes faster than {\tt eig} for relatively small  $n$. This is due to the fact that \Matlab{}'s {\tt eig} first performs tridiagonal 
reduction. Our results show the benefit of avoiding the reduction of a banded matrix to a tridiagonal form, especially when the bandwidth is small.  
However, for tridiagonal matrices the breakeven point is relatively high. 
\begin{table}[ht!]
\caption{Breakeven point of hSDC relative to {\tt eig} applied for banded matrices with various bandwidths and spectral gaps.}
\label{table:break_even_point}
\centering
{\renewcommand{\arraystretch}{1}
\begin{tabular}{c||c||c||c||c}
\backslashbox[5mm]{gap}{b} &$1$ &$2$ &$4$ &$8$ \\
 \hline
 \hline
 $10^{-1}$ &$n = 10000 $ &$n = 2100$ &$n= 3000$ &$n=5300$ \\
 \hline
 $10^{-2}$ &$n = 14000 $ &$n = 2500$ &$n= 3800$ &$n=7400$  \\
 \hline
 $10^{-3}$ &$n = 16000$ &$n = 2800$ &$n= 4900$ &$n=8000$  \\
 \hline
 $10^{-4}$ &$n = 17000 $ &$n = 3000$ &$n= 5200$ &$n=8600$  \\
\end{tabular}
 }
\end{table}
\end{example}

\begin{example}[\bfseries{Accuracy for various matrices}]
\label{ex:accuracy} 
\rm In this example, we test the accuracy of the computed spectral decomposition. Denoting with $Q = [q_1, \ldots, q_n]$ and $\Lambda = \diag(\lambda_1, \ldots, \lambda_n)$ the output of Algorithm~\ref{alg:hsdc}, and 
with  $\tilde{Q} = [\tilde{q}_1, \ldots, \tilde{q}_n]$ and $\tilde{\Lambda}  = \diag(\tilde{\lambda}_1, \ldots, \tilde{\lambda}_n)$ 
the eigenvalue decomposition obtained using \Matlab{}'s {\tt eig}, we consider four different error metrics: 
\begin{itemize}
 \item  the largest relative error in the computed eigenvalues: $e_{\lambda} =  \underset{i}{\max}\hskip 3pt \vert \lambda_i - \tilde{\lambda}_i\vert/\Vert A\Vert_2, $ 
\item  the largest relative residual norm: $e_{\operatorname{res}} = \underset{i}{\max}\hskip 3pt \Vert Aq_i - \lambda_i q_i\Vert_2/\Vert A\Vert_2,$ 
\item  the loss of orthogonality: $ e_{\operatorname{orth}} =  \underset{i}{\max} \Vert Q^T q_i - e_i\Vert_2,$ 
\item the largest error in the computed eigenvectors : $e_{Q} =  \underset{i}{\max}\hskip 3pt \vert 1 - \cos \measuredangle (q_i,\tilde{q}_i) \vert$. 
\end{itemize}

In the subsequent experiments we set $\delta = 0.4$. 
 
\begin{enumerate} 
 \item First we show the accuracy of the newly proposed algorithm for tridiagonal matrices. For matrices of size smaller than $3250$,  we use $n_{\stp} = 500$, which allows us to perform at least one divide step 
 in Algorithm~\ref{alg:hsdc}.  We consider some of the matrices suggested in~\cite{MarqVomeDemmParl2009}: 
 \begin{itemize}
\item the BCSSTRUC1 set in the Harwell-Boeing Collection~\cite{Davis2007}. Considered problems are in fact generalized eigenvalue problems, with $M$ a mass matrix and  $K$ a stiffnes matrix. Each problem 
is transformed  into an equivalent standard eigenvalue problem  $L^{-1}KL^{-T}x = \lambda x$, where $L$ denotes the Cholesky factor of 
$M$. Finally, matrices are reduced to tridiagonal form via \Matlab{} function {\tt hess}.
\item The symmetric Alemdar and Cannizzo matrices, and matrices from the NASA set~\cite{Davis2007}. Considered matrices are reduced to tridiagonal form using \Matlab{} function {\tt hess}.
\item The $(1,2,1)$ symmetric tridiagonal Toeplitz matrix.
\item The Legendre-type tridiagonal matrix. 
\item The Laguerre-type tridiagonal matrix. 
\item The Hermite-type tridiagonal matrix.
\item Symmetric tridiagonal matrices with eigenvalues coming from a random $(0,1)$ distribution and a uniform distribution on $[-1,1]$.
\end{itemize} 

In Table~\ref{table:error_test_matrices} we report the observed accuracies. The results are satisfactory, and the errors are roughly of order of the truncation tolerance $\epsilon = 10^{-10}$. We also mention 
that the percentage of selected columns, as well as the condition number of selected columns throughout Algorithm~\ref{alg:hsdc} were along the lines the results presented in Example~\ref{ex:percentage_cond_sel_col}.

\begin{table}[tbhp]

\caption{Accuracy of hSDC for tridiagonal matrices from Example~\ref{ex:accuracy}.}
 \centering
{\renewcommand{\arraystretch}{1}
\begin{tabular}{c||c||c|c|c|c|c}
&matrix &$n$ &$e_{\lambda}$ &$e_{\operatorname{res}}$ & $e_{\operatorname{orth}}$ &$e_{Q}$    \\
 \hline
 \hline
  \multirow{3}{*}{\begin{sideways}\tiny{BCSSTRUC1}\end{sideways}} &\text{bcsst08} &$1074$ &$4.4\cdot 10^{-14}$ &$ 2.2\cdot 10^{-12}$ &$5.6\cdot 10^{-10}$ &$ 5.6\cdot 10^{-12}$\\
 \cline{2-7}
 &\text{bcsst09} &$1083$ &$5.2\cdot 10^{-11}$ &$2.3\cdot 10^{-11}$ &$7.8\cdot 10^{-10}$ &$6.3\cdot 10^{-12}$\\
 \cline{2-7}
 &\text{bcsst11} &$1474$ &$1.8\cdot 10^{-11}$ &$1.4\cdot 10^{-10}$ &$2.6\cdot 10^{-9}$ &$7.3\cdot 10^{-11}$\\
 \cline{2-7}
 \hline
 \hline
   \multirow{4}{*}{\begin{sideways}\tiny{NASA}\end{sideways}} &\text{nasa1824} &$1824$ &$3.2\cdot 10^{-12}$ &$ 6.8\cdot 10^{-9}$ &$2.5\cdot 10^{-9}$ &$ 1.5\cdot 10^{-10}$\\
 \cline{2-7}
 &\text{nasa2146} &$2146$ &$1.5\cdot 10^{-10}$ &$2.9\cdot 10^{-9}$ &$1.8\cdot 10^{-9}$ &$7.1\cdot 10^{-11}$\\
 \cline{2-7}
 &\text{nasa2190} &$2190$ &$1.5\cdot 10^{-12}$ &$2.1\cdot 10^{-12}$ &$6.7\cdot 10^{-10}$ &$4.1\cdot 10^{-11}$\\
 \cline{2-7}
 &\text{nasa4704} &$4704$ &$8.9\cdot 10^{-12}$ &$2.9\cdot 10^{-10}$ &$9.8\cdot 10^{-9}$   &$ 5.3\cdot 10^{-10}$\\
 \hline
 \hline
 &\text{Cannizzo matrix} &$4098$ &$ 3.4\cdot 10^{-11}$ &$ 8.3\cdot 10^{-10}$ &$1.5\cdot 10^{-9}$ &$2.6\cdot 10^{-10}$\\
 \cline{2-7}
 &\text{Alemdar matrix} &$6245$  &$1.25\cdot 10^{-9}$ &$7.4\cdot 10^{-8} $ &$2.5\cdot 10^{-9}$ &$ 2.8\cdot 10^{-10}$\\
 \hline
 \hline
 &\text{(1,2,1) matrix} &$10000$ &$ 2.4\cdot 10^{-10}$ &$1.6\cdot 10^{-9}$  &$2.5\cdot 10^{-11} $ &$1.3\cdot 10^{-11}$ \\
\cline{2-7}
  \cline{2-7}
   &\text{Clement-type} &$10000$ &$1.75\cdot 10^{-10}$ &$2.3\cdot 10^{-9}$ &$2.6\cdot 10^{-9}$ &$1.4\cdot 10^{-10}$\\
  \hline
  \hline
   &\text{Legendre-type} &$10000$ &$2.4\cdot 10^{-11}$ &$2.7\cdot 10^{-10}$ &$9.6\cdot 10^{-11}$ &$2.7\cdot 10^{-11}$\\
  \cline{2-7}
   &\text{Laguerre-type} &$10000$ &$3.4\cdot 10^{-11}$ &$1.9\cdot 10^{-10}$ &$3.7\cdot 10^{-9}$ &$4.3\cdot 10^{-11}$\\
  \cline{2-7}
   &\text{Hermite-type} &$10000$ &$9.8\cdot 10^{-11}$ &$3.1\cdot 10^{-9}$ &$9.3\cdot 10^{-10}$ &$3.2\cdot 10^{-11}$\\
 \hline
 \hline
 &\text{ Random normal $(0,1)$} &$10000$ &$6.9\cdot10^{-10}$ &$4.5\cdot 10^{-8}$ &$ 7.5\cdot 10^{-9}$ &$1.1\cdot 10^{-10} $ \\
 \cline{2-7}
  &\text{Uniform in $[-1,1]$} &$10000$ &$1.1\cdot 10^{-11}$ &$2.1\cdot 10^{-9}$ &$3.5\cdot 10^{-10}$ &$1.8\cdot 10^{-11}$\\
  \hline
 \end{tabular}
 }
  \label{table:error_test_matrices}
\end{table}

\item Now we examine the dependency of the error measures on the decreasing spectral gap. We construct $8$-banded matrices of size $n = 10240$ with 
$\gap = 10^{-i}, i = 1,\ldots, 10$ using a method from section~\ref{sec:matrix_gen}. As in Example~\ref{ex:percentage_cond_sel_col}, we ensure that the gap between two separated parts of spectrum in all divide steps of Algorithm~\ref{alg:hsdc} is the 
prescribed $\gap$. Figure~\ref{fig:error_vs_gap_tridiag} shows that our algorithm 
preforms well for matrices with larger spectral gaps, and confirms the expected behaviour of errors. The error growth is a result of the decreasing accuracy when computing
spectral projectors associated with decreasing spectral gaps. For gaps of order $\leq 10^{-9}$ the algorithm breaks down due to indefinitness
of a matrix in Line~\ref{alg_chol1} of Algorithm~\ref{alg:hdwh}. 

\begin{figure}[tbhp]
\begin{center}
\includegraphics[width=0.45\textwidth]{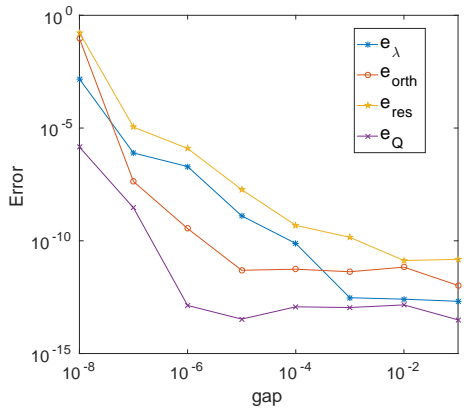}
\end{center}
\caption{Example~\ref{ex:accuracy}. Behaviour of the errors with respect to a decreasing spectral gap for banded matrices.}
\label{fig:error_vs_gap_tridiag}
\end{figure}

\end{enumerate}
\end{example}

\begin{example}[\bfseries{Scalability}]
\label{ex:scalability} 
\rm For $n\times n$ tridiagonal matrices, generated as in section~\ref{sec:matrix_gen} with $\gap = 10^{-2}$, we demonstrate the performance of our algorithm with respect to $n$. Again we use the 
threshold parameter $\delta = 0.4$.  We show that the asymptotic behaviour of our algorithm matches
the theoretical bounds both for the computational time and storage requirements. Figure~\ref{fig:scale_tridiag_1e-2} (left) shows that time needed to compute the complete
spectral decomposition follows the expected $\calO(n\log^3(n))$ reference line, whereas Figure~\ref{fig:scale_tridiag_1e-2} (right) demonstrates that the memory required to store the 
matrix of eigenvectors is $\calO(n\log^2(n))$. 

\begin{figure}[tbhp]
\begin{center}
\includegraphics[width=0.45\textwidth]{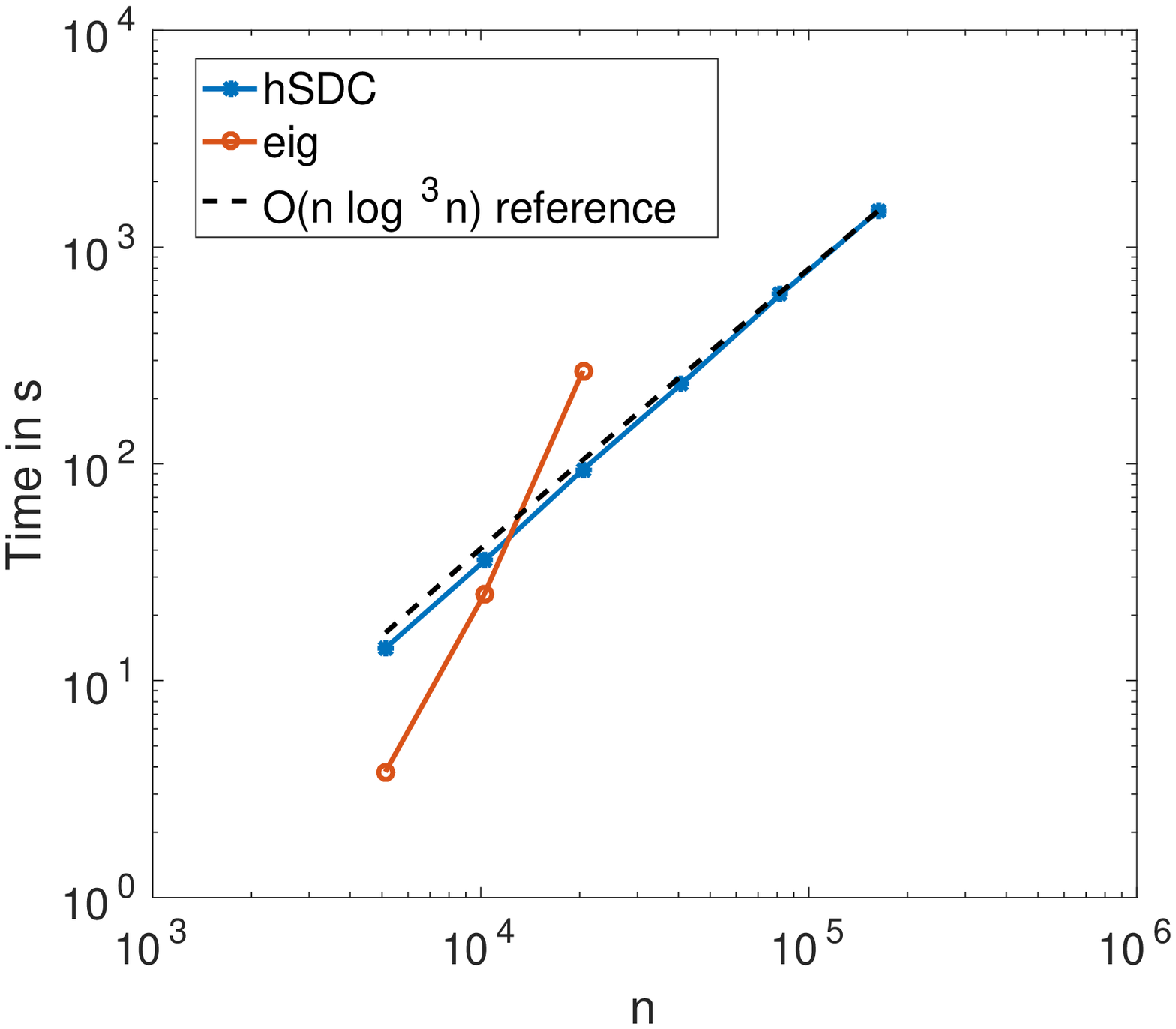}
\includegraphics[width=0.45\textwidth]{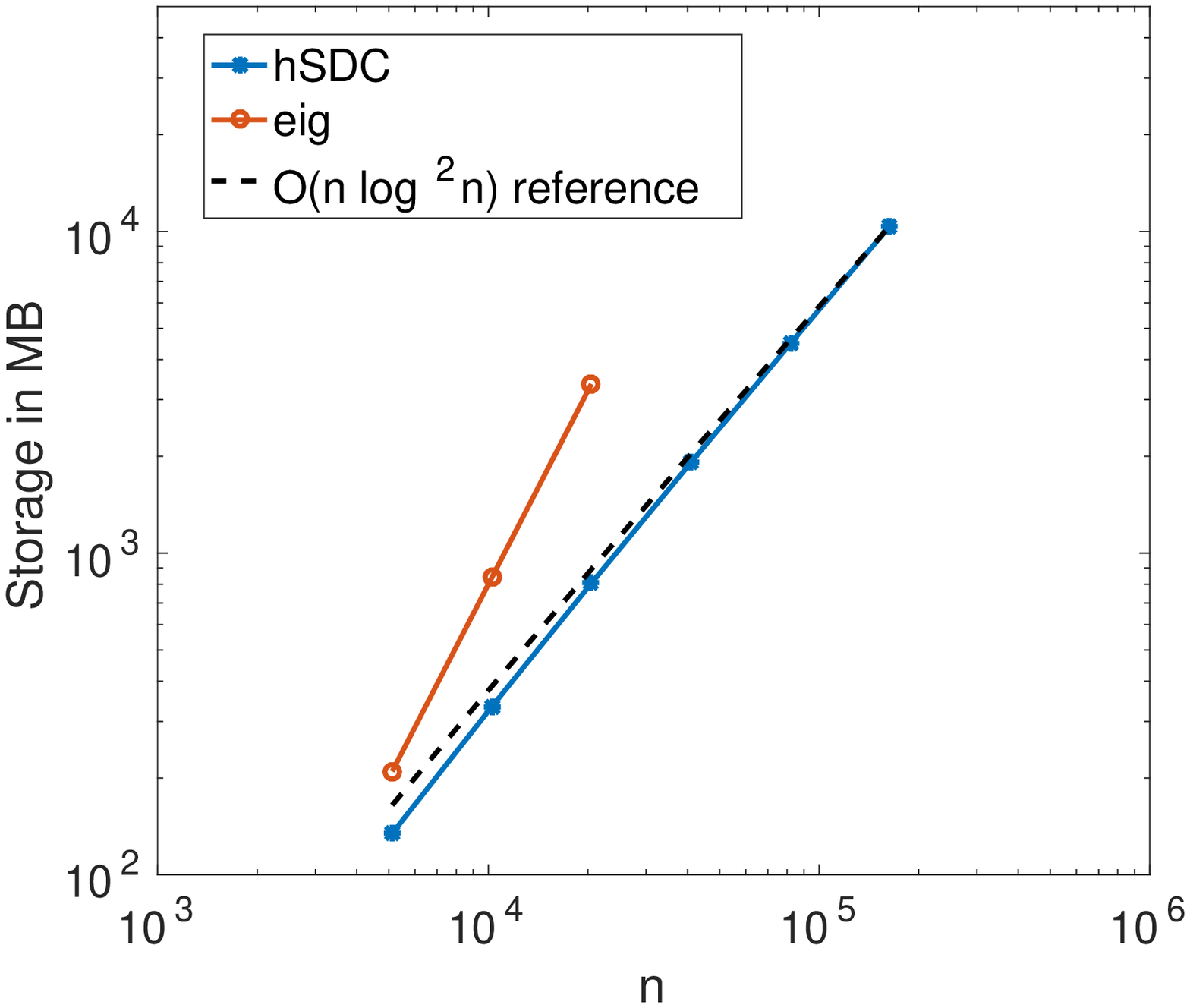}
\end{center}
\caption{Example~\ref{ex:scalability}. Performance of the hSDC algorithm with respect to $n$ for tridiagonal matrices. Left: Computational time. Right: Memory requirements.}
\label{fig:scale_tridiag_1e-2}
\end{figure}

\end{example}

\section{Conclusion}
\label{sec:conclusion}

In this work we have proposed a new fast spectral divide-and-conquer algorithm for computing the complete spectral decomposition of symmetric banded matrices. The algorithm exploits the fact that 
spectral projectors of banded matrices can be efficiently computed and stored in the HODLR format. We have presented a fast novel method for 
selecting well-conditioned columns of a spectral projector based on a Cholesky decomposition with pivoting, and provided a theoretical justification for the method. This method
enables us to efficiently split the computation of the spectral decomposition of a symmetric HODLR matrix into two smaller subproblems.  

The new spectral D\&C method is implemented in the HODLR format and  has a linear-polylogarithmic complexity. In the numerical experiments, performed both on 
synthetic matrices and matrices coming from applications, we have verified the efficiency of our method, and have shown that it is a competitive alternative to the state-of-the-art methods for some classes 
of banded matrices. 

\begin{paragraph}{Acknowledgements.}
The authors would like to thank Stefano Massei for helpful discussions on this paper.
\end{paragraph}

\bibliographystyle{plain}
\bibliography{biblio}

\end{document}